\numberwithin{equation}{section}
                        \theoremstyle{plain}
\newtheorem{theorem}{Theorem}[section]
\newtheorem{lemma}[theorem]{Lemma}
\newtheorem{proposition}[theorem]{Proposition}
\newtheorem{conjecture}{Conjecture}
\newtheorem{Thm}{Theorem}
\def\BZ{\mathbb Z}
\def\BR{\mathbb R}
\def\BN{\mathbb N}
\def\BQ{\mathbb Q}
\def\BC{\mathbb C}
\def\cE{{\mathcal E}}
\def\la{\langle}
\def\ra{\rangle}
\newcommand{\Vol}{\operatorname{Vol}}
\begin{document}
\title[On the volume conjecture for cables of knots]
{On the volume conjecture for cables of knots}

\author[Thang  T. Q. Le]{Thang  T. Q. Le}
\address{School of Mathematics, 686 Cherry Street,
 Georgia Tech, Atlanta, GA 30332, USA}
\email{letu@math.gatech.edu}

\author[Anh T. Tran]{Anh T. Tran}
\address{School of Mathematics, 686 Cherry Street,
 Georgia Tech, Atlanta, GA 30332, USA}
\email{tran@math.gatech.edu}

\thanks{T.L. was supported in part by National Science Foundation. \\
1991 {\em Mathematics Classification.} Primary 57N10. Secondary 57M25.\\
{\em Key words and phrases: hyperbolic volume conjecture, Jones polynomial.}}

\date{April 18, 2009}

\begin{abstract}
We establish the volume conjecture for $(m,2)$-cables of the figure 8 knot, when $m$ is odd. For $(m,2)$-cables of general knots where $m$ is even, we show that the
limit in the volume conjecture depends on the parity of the color (of the Kashaev invariant). There are many cases when the volume conjecture
for cables of the figure 8 knot is false if one considers all the colors, but holds true if one restricts the colors to a subset of the set of positive integers.
\end{abstract}

\maketitle


\section*{Introduction}

\subsection{The colored Jones polynomial and the Kashaev invariant of a link} Suppose $K$ is framed oriented link with $m$ ordered components in $S^3$. To every $m$-tuple $(n_1,\dots,n_m)$ of positive integers
one can associate a Laurent polynomial
$
J_K(n_1,\dots,n_m;q) \in \BZ[q^{\pm 1/4}]
$, called the colored Jones polynomial, with $n_j$ being the color of the $j$-component of $K$. The polynomial $J_K(n_1,\dots,n_m;q)$ is the quantum link invariant, as defined by Reshetikhin and Turaev \cite{RT,Turaev},
associated to the Lie algebra ${sl}_2(\BC)$, with the color $n_j$ standing for the irreducible ${sl}_2(\BC)$-module $V_{n_j}$ of dimension $n_j$.
Here we use the functorial normalization, i.e. the one for which the colored Jones polynomial of the unknot colored by $n$ is
$$ [n] := \frac{q^{n/2} - q^{-n/2}}{q^{1/2} - q^{-1/2}}.$$

When all the colors are 2, the colored Jones polynomial is the
usual Jones polynomial \cite{Jones}. The colored Jones polynomials of higher colors are more or less  the usual Jones polynomials
of cables of the link.

Following  \cite{MM}, we define  the Kashaev invariant of a link $K$ as the sequence $\la K\ra_N$, $N=1,2,\dots$, by
$$ \la K \ra_N := \frac{J_K(N,\dots, N;q)}{[N]}\vert_{q^{1/4} = \exp(\pi i /2N)}.$$

\subsection{The volume conjecture for knots and links} According to Thurston theory, by cutting the link complement $S^3 \setminus K$ along appropriate disjoint tori one gets a collection of pieces, each is either Seifert fibered or hyperbolic; and $\Vol(K)$ is defined as the sum of the hyperbolic volume of the hyperbolic pieces. It is known that $\Vol(K)= v_3\,  ||S^3\setminus K||$, where
$v_3$ is the volume of the ideal regular tetrahedron, and $||S^3\setminus K||$ is the Gromov norm.
 We can now formulate the volume conjecture of Kashaev-Murakami-Murakami \cite{Kashaev,MM}:

\begin{conjecture} Suppose $K$ is a knot in $S^3$, then
$$ \lim_{N \to \infty}   \frac{\log |\la K\ra_N|}{ N} = \frac{\Vol(K)}{2 \pi}.
$$
\end{conjecture}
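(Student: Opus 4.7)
The plan is to attack the conjecture via asymptotic analysis of the Reshetikhin--Turaev state sum for $\la K\ra_N$ at the root of unity $q=\exp(2\pi i/N)$. Starting from a planar diagram $D$ of $K$, first I would expand $\la K\ra_N$ as a finite sum of products of $R$-matrix entries and quantum $6j$-symbols, rewrite each quantum factorial through the Faddeev quantum dilogarithm, and apply Euler--Maclaurin (or Poisson) summation to convert the multi-sum into a multidimensional oscillatory integral of the form
$$ \la K\ra_N \;\sim\; \int_{\Omega} \exp\!\Bigl(\tfrac{N}{2\pi}\, S(z)\Bigr)\, \mu(z)\, dz, $$
where $S$ is a multi-valued complex potential built from classical dilogarithms and $\Omega$ is the integration cycle determined by $D$.

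The second step is geometric. I would show that the critical equations $\partial S/\partial z_j = 0$ coincide, after a suitable change of variables, with the gluing and completeness equations of an ideal triangulation of $S^3\setminus K$ read off from $D$, in the spirit of Yokota's and Thurston's constructions. Solutions of these equations parametrize $\mathrm{PSL}_2(\BC)$-representations of $\pi_1(S^3\setminus K)$, and a Bloch--Wigner dilogarithm identity should yield $S(z_0)= i\,\Vol(K)/2\pi$ at the critical point $z_0$ corresponding to the discrete faithful representation on each hyperbolic piece of the JSJ decomposition. A stationary-phase estimate around $z_0$ then produces growth of order $\exp(N\,\Vol(K)/(2\pi))$.

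The third and decisive step is the selection of $z_0$ as the dominant saddle. I would deform $\Omega$ onto a Lefschetz thimble through $z_0$ and prove that no other critical point of $S$ yields a larger $\mathrm{Re}\, S$. For knots with nontrivial JSJ decomposition one additionally needs that critical points arising from Seifert fibered pieces have $\mathrm{Re}\, S \le 0$, so that only hyperbolic pieces generate exponential growth and their contributions add up to $\Vol(K)/(2\pi)$. This gives the lower bound $\liminf \frac{\log|\la K\ra_N|}{N} \ge \frac{\Vol(K)}{2\pi}$; the matching upper bound should follow from a triangle inequality on the original state sum together with a uniform bound on $\mathrm{Re}\,S$ over the original cycle, possibly combined with Gromov-norm estimates.

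The hard part, and the reason the conjecture remains open in general, is precisely the global contour deformation and the dominance of $z_0$: controlling the branch cuts of the dilogarithm, ruling out spurious non-geometric saddles such as Galois conjugate representations, parabolic or boundary degenerations, and obstructions to pushing $\Omega$ across walls of the critical set. These issues are knot-dependent and have no known uniform resolution. Consequently, any realistic instantiation of this plan must either restrict to a family of knots whose potential $S$ admits an explicit description (as for the cables of the figure $8$ knot treated later in this paper) or assume additional hypotheses that are verified case by case.
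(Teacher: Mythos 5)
You have not proved anything: the statement you were asked to address is the Kashaev--Murakami--Murakami volume conjecture itself, which is stated in the paper as a \emph{conjecture} and is not proved there (nor anywhere else in full generality). The paper only establishes special cases for $(m,2)$-cables of the figure~8 knot, and it does so by a completely different and far more elementary route: Habiro's cyclotomic expansion $J_K(N;q)=[N]\sum_l C_K(l;q)\prod_k A(N,k)$, the cabling formula, an exact pairing of the colors $N+j$ and $N-j$, and then real-variable estimates of the resulting products of sines via the Lobachevsky function. No quantum dilogarithm, no contour deformation, and no saddle-point selection in several complex variables appear in the paper.

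The concrete gap in your proposal is the one you yourself concede in your third step: the deformation of the integration cycle $\Omega$ onto a Lefschetz thimble through the geometric critical point $z_0$, and the proof that $z_0$ dominates all other critical points (Galois conjugates, non-geometric and degenerate solutions, contributions of Seifert fibered JSJ pieces), is not carried out and is precisely the open part of the problem. An outline that reduces a conjecture to its known hard core, with the hard core left unresolved, is not a proof. Moreover, the paper's own Theorem~\ref{1} (vanishing of $\la K^{(0,2)}\ra_N$ for all even $N$) and the case $m\equiv 4\pmod 8$ of Theorem~\ref{thm01} show that even the \emph{statement} requires care about which subsequence of colors one uses once one leaves the class of knots for which the naive limit exists; any honest attack must confront this parity phenomenon, which your analytic framework does not see. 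If you want to contribute at the level of this paper, you should instead work out the Habiro-expansion estimates for a specific family of knots where the building blocks $S(k,l)$ and $A(N,k)$ can be bounded explicitly.
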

For a survey on the volume conjecture, see \cite{Murakami}. Already in \cite{MM} it was noted that the volume conjecture in the above form cannot be true for
split links, since for split links the Kashaev invariant vanishes. There are a few cases of links (of more than one components) when the volume conjecture had been confirmed: in particular,
 the volume conjecture was established for the Borromean rings \cite{GL}, the Whitehead link \cite{Zh}, and more general, for Whitehead chains \cite{Veen}.

 When the Kashaev invariant vanishes, one might hope to remedy the conjecture by renormalizing the colored Jones polynomial. One of consequences of  the present paper
is that the normalization alone is not good enough, we have also to distinguish between the cases
 $N$ even and $N$ odd.

\subsection{Main results}
For a knot $K$ with framing 0, let $K^{(m,p)}$ be the $(m,p)$-cable of $K$, also
with framing 0, see the precise definition in \S \ref{cable}. Note that if $m$ and $p$ are co-prime, then $K^{(m,p)}$ is again a knot.
The two-component link $K^{(0,2)}$ is called the {\em disconnected cable} of $K$. Note that we always have $\Vol(K^{(m,p)})= \Vol(K)$.

In this paper we study the volume conjecture for cables of a knot $K$. It turns out that the case $N$ even and the case $N$ odd are quite different.

\begin{Thm}  Suppose that $K$ is a knot and $K^{(0,2)}$ the disconnected cable of $K$. Then $\la K^{(0,2)}\ra_N =0$ for every {\em even} $N$. \label{1}
\end{Thm}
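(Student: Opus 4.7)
My plan is to combine the cabling formula for the colored Jones polynomial with a parity symmetry at $q=\exp(2\pi i/N)$, and then to check a more refined version of this symmetry in order to handle the $0/0$ in the definition of the Kashaev invariant.

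Because $K^{(0,2)}$ consists of two $0$-framed parallels of $K$, the cabling formula together with the Clebsch-Gordan decomposition $V_N\otimes V_N=V_1\oplus V_3\oplus\cdots\oplus V_{2N-1}$ gives
$$J_{K^{(0,2)}}(N,N;q)=\sum_{k=1}^N J_K(2k-1;q).$$
Habiro's cyclotomic expansion provides, for any $0$-framed knot $K$, a polynomial $R_K(X,q)$ with $J_K(n;q)/[n]=R_K(q^n+q^{-n},q)$. Setting $u=q^{1/2}$, $v_k=u^{2k-1}$, $\omega=e^{i\pi/N}$, and $\zeta=\omega^2$, the sum becomes
$$J_{K^{(0,2)}}(N,N;q)=\frac{1}{u-u^{-1}}\sum_{k=1}^N F(v_k,q),\qquad F(v,q):=(v-v^{-1})\,R_K(v^2+v^{-2},q),$$
with $F$ manifestly odd in $v$. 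At $q=\zeta$ the $N$ numbers $v_k=\omega^{2k-1}$ are exactly the roots of $v^N=-1$, and for even $N$ this set is stable under $v\mapsto-v$. Pairing $v\leftrightarrow -v$ (equivalently $k\leftrightarrow k+N/2$) thus immediately yields $J_{K^{(0,2)}}(N,N;\zeta)=0$.

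The main obstacle is that $[N]$ also vanishes to first order at $\zeta$, so this alone does not imply $\la K^{(0,2)}\ra_N=0$; I must show that $J_{K^{(0,2)}}(N,N;q)$ vanishes to order at least two at $q=\zeta$. Writing $u=\omega e^{\epsilon}$ and expanding, a short computation shows that after the pairing $k\leftrightarrow k+N/2$ the $O(\epsilon)$ contribution reduces, up to a nonzero constant, to
$$\sum_{k=1}^{N/2} v_k\,\partial_v F(v_k,\zeta).$$
The decisive algebraic observation is that
$$v\,\partial_v F(v,q)=(v+v^{-1})\,P\!\bigl((v+v^{-1})^2,q\bigr)$$
for a polynomial $P$, since $v^2+v^{-2}=(v+v^{-1})^2-2$ and $(v-v^{-1})^2=(v+v^{-1})^2-4$ are both polynomials in $(v+v^{-1})^2$. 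A second parity pairing $k\leftrightarrow N/2+1-k$, under which $v_k+v_k^{-1}\mapsto -(v_k+v_k^{-1})$ while $(v_k+v_k^{-1})^2$ is fixed, then annihilates this first-order sum. Thus $J_{K^{(0,2)}}(N,N;q)$ vanishes to order at least two at $q=\zeta$ while $[N]$ has only a simple zero, giving $\la K^{(0,2)}\ra_N=0$.
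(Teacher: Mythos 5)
Your proof is correct, and it takes a genuinely different route from the paper's. The paper also starts from the cabling formula and Habiro's expansion, but it keeps the expansion in its block form $J_K(n;q)=[n]\sum_l C_K(l;q)\prod_{k=1}^l A(n,k)$, pairs the colors $N+j$ and $N-j$ to produce explicit quantities $D(j,l)$ (computing $\prod_k A(N-j,k)$ modulo $\{N\}^2$, which is the exact analogue of your second-order expansion at $u=\omega$), and then pairs $D(j,l)$ with $D(N-j,l)$ via the identity $D(j,l)+D(N-j,l)=\frac{mN}{2}S(j-l,j+l)$; Theorem \ref{1} then falls out as the $m=0$ case of the closed formula of Theorem \ref{11}. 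You instead package Habiro's expansion into the single statement that $J_K(n;q)/[n]$ is a polynomial in $q^n+q^{-n}$, and replace the explicit congruences by an order-of-vanishing argument at $u=\omega$ driven by two parity involutions on the set of colors: $v\mapsto -v$ (colors differing by $N$) kills the constant term, and $v+v^{-1}\mapsto -(v+v^{-1})$ (colors summing to $N$) kills the first-order term. The orbits generated by your two involutions are the same four-element color orbits $\{j,\,N-j,\,N+j,\,2N-j\}$ that the paper combines, so the underlying symmetry is identical; your version is cleaner and avoids all computation with $D(j,l)$, but it gives up the exact formula of Theorem \ref{11} for general even $m$, which the paper needs for its other results. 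Two small points you should make explicit: (a) a single polynomial $R_K$ valid simultaneously for all colors $1,3,\dots,2N-1$ is obtained by truncating Habiro's series at level $2N-2$, the extra terms vanishing at each such color because $A(n,n)=0$; and (b) when $N\equiv 2\pmod 4$ your second involution has the fixed point $k=(N+2)/4$, where $v_k=i$ --- that term still vanishes because $v_k+v_k^{-1}=0$.
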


The case of odd $N$ is quite different, at least for the figure 8 knot:
\begin{Thm}
 Suppose $\cE$ is the figure 8 knot and $\cE^{(0,2)}$ its disconnected cable. Then the volume conjecture holds true for $\cE^{(0,2)}$ if the colors are restricted to the set
 of odd numbers:
 $$ \lim_{N \to \infty,\, \, N \text{ \rm odd}}   \frac{\log |\la \cE^{(0,2)}\ra_N|}{ N} = \frac{\Vol(\cE^{(0,2)})}{2 \pi}.
 \label{2}
$$
 \end{Thm}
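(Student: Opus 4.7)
The plan is to reduce the Kashaev invariant of the cable to the Kashaev invariant of $\cE$ itself, up to exponentially smaller corrections, and then invoke the known volume conjecture for the figure 8 knot. Let $q_0 := e^{2\pi i/N}$ and $\{m\} := q^{m/2}-q^{-m/2}$. The Clebsch--Gordan rule $V_N\otimes V_N\cong\bigoplus_{k=0}^{N-1}V_{2k+1}$ gives
\[
S_N(q) := J_{\cE^{(0,2)}}(N,N;q) = \sum_{k=0}^{N-1}J_{\cE}(2k+1;q),
\]
while Habiro's cyclotomic formula for the figure 8 reads
\[
J_\cE(n;q) = [n]\sum_{\ell=0}^{n-1}\prod_{j=1}^\ell \{n-j\}\{n+j\}.
\]
Since $q_0^{N/2}=-1$ we have $\{m+N\}|_{q_0}=-\{m\}|_{q_0}$ and $[m+N]|_{q_0}=-[m]|_{q_0}$; plugging these into Habiro's formula yields two symmetries at $q_0$: the normalized polynomial $J_\cE(n;q_0)/[n](q_0)$ is $N$-periodic in $n$, and $J_\cE(n;q_0)=J_\cE(N-n;q_0)$ for $1\le n\le N-1$.

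Since $N$ is odd, the odd integers $1,3,\dots,2N-1$ hit each residue modulo $N$ exactly once, and the two symmetries let us reshuffle
\[
S_N(q_0) = \sum_{m=1}^{N-1}(-1)^{m+1}J_\cE(m;q_0),
\]
the $m=0$ contribution (which would correspond to $n=N$) dropping out because $[N](q_0)=0$. For $N$ odd, pairing $m\leftrightarrow N-m$ flips the sign $(-1)^{m+1}$ while leaving $J_\cE(m;q_0)$ invariant, so the pairs cancel and $S_N(q_0)=0$. Because $[N](q_0)=0$ as well, L'H\^opital's rule yields
\[
\la\cE^{(0,2)}\ra_N = \frac{S_N'(q_0)}{[N]'(q_0)}.
\]

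Differentiating $J_\cE(n;q)=[n](q)\,g_n(q)$ with $g_n := J_\cE/[n]$, the single term in $S_N'(q_0)$ having $2k+1=N$ contributes exactly $[N]'(q_0)\,\la\cE\ra_N$, whereas for every other $k$ the factor $[2k+1](q_0)$ is nonzero and one picks up a polynomial-in-$N$ combination of $g_n(q_0)$ and $g_n'(q_0)$ with $n=2k+1\ne N$. A term-by-term saddle-point analysis of Habiro's sum at $q_0$ is then intended to give
\[
\limsup_{N\to\infty}\frac{1}{N}\log\max_{1\le n\le N-1}\max\bigl(|g_n(q_0)|,\,|g_n'(q_0)|\bigr) \;<\; \frac{\Vol(\cE)}{2\pi},
\]
whereas $[N]'(q_0)$, $[n](q_0)$, and $[n]'(q_0)$ grow only polynomially in $N$. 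Dividing by $[N]'(q_0)$ then gives
\[
\la\cE^{(0,2)}\ra_N = \la\cE\ra_N + O(e^{cN})\quad\text{for some } c<\Vol(\cE)/(2\pi),
\]
and combining with Murakami--Murakami's volume conjecture for $\cE$ together with the identity $\Vol(\cE^{(0,2)})=\Vol(\cE)$ finishes the proof.

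The main obstacle is the strict inequality just displayed. For $n=\lfloor\alpha N\rfloor$ with $\alpha\in(0,1)$, the exponential rate of $|g_n(q_0)|$ is governed by $\max_\tau\int_0^\tau\log|4\sin(\pi(\alpha+t))\sin(\pi(\alpha-t))|\,dt$, and one needs this maximum to stay strictly below its value $\Vol(\cE)/(2\pi)$ at $\alpha=1$. Geometrically this asserts that incomplete hyperbolic deformations of $S^3\setminus\cE$ have strictly smaller Chern-Simons volume than the complete structure; verifying it uniformly on $[0,1-\delta]$ requires a careful analysis of the two-variable exponent, paying particular attention to the logarithmic singularities of the integrand and to the alternating signs in the reshuffled sum, which could in principle produce dangerous reinforcements of subdominant saddles.
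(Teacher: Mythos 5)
Your reduction steps are all correct and are essentially the symmetry principle the paper alludes to: at $q_0$ one has $J_\cE(n+N;q_0)=-J_\cE(n;q_0)$ and $J_\cE(N-n;q_0)=J_\cE(n;q_0)$, hence $S_N(q_0)=0$ for odd $N$, and the L'H\^opital extraction $\la\cE^{(0,2)}\ra_N=S_N'(q_0)/[N]'(q_0)$ is legitimate because $[N]$ has a simple zero at $q_0$ and $S_N/[N]$ is a Laurent polynomial. The fatal problem is the strict inequality you need,
$$\limsup_{N\to\infty}\frac1N\log\max_{1\le n\le N-1}|g_n(q_0)|<\frac{\Vol(\cE)}{2\pi},$$
which is \emph{false}. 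For $n$ near $N/2$ the largest term of Habiro's sum for your $g_n(q_0)=J_\cE(n;q_0)/[n](q_0)$ is, in the paper's notation, of size $F(\tfrac N2,\tfrac N3)=\tfrac1N E(0,\tfrac{5N}6)\,O(1)$ by Lemma \ref{lem10}, i.e. $\exp\bigl(\tfrac{2N}{\pi}L(\tfrac{\pi}{6})+O(\log N)\bigr)$ --- exactly the rate $\Vol(\cE)/2\pi$. In your own integral formulation: at $\alpha=\tfrac12$ one has $4\sin(\pi(\tfrac12+t))\sin(\pi(\tfrac12-t))=4\cos^2\pi t$, and $\max_\tau\int_0^\tau\log(4\cos^2\pi t)\,dt=\tfrac2\pi L(\tfrac\pi6)$ (attained at $\tau=\tfrac13$), which equals the value at $\alpha=1$. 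So the colors $n\approx N/2$ contribute at the same exponential rate as the color $n=N$, and the "obstacle" you flag at the end is not a verification to be carried out: the inequality genuinely fails at $\alpha=\tfrac12$, so no term-by-term bound can make the correction exponentially small.

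This is precisely the difficulty the paper's proof is organized around. There the contributions with $l<\min(j,N-j)$ (the terms $D_1(j,l)$, carrying $S(j-l,j+l)=\pm i\,F(j,l)$, which peaks at $(j,l)=(\tfrac N2,\tfrac N3)$ with the same exponential rate as the main term) are shown to be smaller than the main term only by a \emph{polynomial} factor, $N^{3\alpha-1}$ versus $N$; this requires the alternating-sign cancellation of Lemma \ref{lm05} (pairing $S(j-l,j+l)$ with $S(j-l-1,j+l+1)$ and telescoping over $l$) together with the bounds of Lemmas \ref{lm03} and \ref{lm04}. Note also that the dominant term is not the single color $n=N$ but a Gaussian sum over an $N^{2\alpha}$-neighborhood of $(j,l)=(0,\tfrac{5N}6)$, which produces the nonzero constant $C$ and the extra factor of $N$ in \eqref{eq04}, and whose coefficient $\beta+\gamma+2(-1)^{N-1}$ must be checked to be nonvanishing (Lemma \ref{lm06}) --- this is where the restriction to odd $N$ enters. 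To repair your argument you would have to prove a cancellation statement of the strength of Lemma \ref{lm05} for $\sum_l\prod_{k\le l}A(n,k)$ and its derivative with $n$ near $N/2$; as written, the proposal does not prove the theorem.
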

 Thus for figure 8 knot, the sequence of Kashaev invariant $|\la \cE^{(0,2)}\ra_N|$ grows exponentially if $N \to \infty$ and $N$ odd. While if $N$ is even, then $|\la \cE^{(0,2)}\ra_N|=0$ (for any knot).

 However, when $m\neq 0$, i.e. when the two components of $K^{(m,2)}$ do have non-trivial linking number, the volume conjecture might still hold true
 even for even $N$.  For example, we have the following result.
 \begin{Thm} Suppose $\cE$ is the figure 8 knot and $m \equiv 2 \pmod 4$. Then the volume conjecture holds true for $\cE^{(m,2)}$ if the colors are restricted to the set
 of numbers divisible by $4$:
 $$ \lim_{N \to \infty,\, \, N \equiv 0 \pmod 4}   \frac{\log |\la \cE^{(m,2)}\ra_N|}{ N} = \frac{\Vol(\cE^{(m,2)})}{2 \pi}.$$
 \label{3}
 \end{Thm}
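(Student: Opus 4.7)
My plan is to follow the strategy already used for Theorem 2, incorporating the twist factor introduced by $m\neq 0$. First I would apply a cabling formula to express the colored Jones polynomial of $\cE^{(m,2)}$ at color $N$ in terms of the colored Jones polynomials of $\cE$ at the colors appearing in the Clebsch--Gordan decomposition $V_N\otimes V_N=\bigoplus_{j=0}^{N-1}V_{2j+1}$. Since the $(m,2)$-cable is two parallel copies of $\cE$ together with $m$ twists, corrected to framing $0$, the formula has the shape
$$
J_{\cE^{(m,2)}}(N,N;q)=\sum_{j=0}^{N-1}\varepsilon_{j,m}\,q^{Q(m,j,N)}\,J_{\cE}(2j+1;q),
$$
where $\varepsilon_{j,m}\in\{\pm 1\}$ comes from the sign of the $R$-matrix eigenvalue on $V_{2j+1}\subset V_N\otimes V_N$ and $Q(m,j,N)$ is quadratic in $j$ with leading coefficient proportional to $m$. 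Dividing by $[N]$ and specializing $q^{1/4}=\exp(\pi i/2N)$ then yields an explicit oscillatory sum for $\la\cE^{(m,2)}\ra_N$.

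Next, I would insert the standard closed formula for $J_{\cE}(k;q)$ (a single $q$-Pochhammer sum) and put each summand, for large $N$, into the shape $P_N(j,n)\,\exp\bigl(N\,\phi_m(j/N,n/N)\bigr)$, with $P_N$ only polynomial in $N$ and $\phi_m$ an explicit complex ``potential''. The twist exponent $Q(m,j,N)$, evaluated at the chosen root of unity, adds only a purely imaginary piece to $\phi_m$, so $\operatorname{Re}\phi_m=\operatorname{Re}\phi_0$ and the critical point $(j^\ast/N,n^\ast/N)$ of $\operatorname{Re}\phi_m$ coincides with the one already identified in the $(0,2)$-case of Theorem 2, with critical value $\Vol(\cE)/(2\pi)=\Vol(\cE^{(m,2)})/(2\pi)$. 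A saddle-point analysis in a window of size $O(\sqrt N)$ around the saddle then produces a lower bound of the desired exponential order, \emph{provided} the phases $\varepsilon_{j,m}\exp\bigl(2\pi i\,Q(m,j,N)/(2N)\bigr)$ do not cancel destructively across that window.

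The heart of the proof, and the main obstacle, is exactly this phase-cancellation analysis. When $m$ is even, $Q(m,j,N)$ is an integer-valued quadratic in $j$ modulo $N$, and its behaviour near $j=j^\ast$ is governed by a Gauss-type sum whose magnitude depends on the parities of $m$ and $N$. A direct case analysis, which is precisely where the hypotheses $m\equiv 2\pmod 4$ and $N\equiv 0\pmod 4$ enter, should show that in this regime the dominant block of $O(\sqrt N)$ indices contributes coherently rather than averaging to zero, so the leading term really is of order $\exp\bigl(N\cdot\Vol(\cE)/(2\pi)\bigr)$. A uniform upper bound of the same exponential order on the full sum, from $\operatorname{Re}\phi_m\le\operatorname{Re}\phi_m(j^\ast/N,n^\ast/N)$ with strict inequality off the saddle, then upgrades the asymptotic to the required limit. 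Other pairs of residues $(m\bmod 4,\,N\bmod 4)$ are expected instead to produce destructive interference, matching the paper's broader theme that the parity of the color crucially affects the volume conjecture for cables.
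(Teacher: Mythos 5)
Your outline matches the paper's strategy at the top level (cabling formula, the explicit Habiro-type formula for $J_\cE$, Lobachevsky-function asymptotics with the dominant terms at $l\approx 5N/6$), but it misidentifies the mechanism through which the hypotheses $m\equiv 2\pmod 4$ and $4\mid N$ enter, and this is a genuine gap rather than a detail. The Gauss-type sum over the $O(\sqrt N)$ window around the saddle that you propose as ``the heart of the proof'' is \emph{never} zero: after rescaling it converges to $\frac{N}{4}\int_{\BR^2}\exp\bigl(\frac{\pi i m}{4}x^2-\pi\sqrt3(x^2+y^2)\bigr)\,dx\,dy$, a nonzero constant for every $m$, because the oscillation is damped by the Gaussian. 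A single-saddle analysis would therefore (wrongly) predict the volume conjecture for all residues of $m$ and $N$. The actual source of the possible destructive interference is that the real part of your potential attains its maximum at \emph{several} points of color space related by the root-of-unity symmetry $n\mapsto 2N-n$ (in the paper's variables: the regions $j$ near $0$ and $j$ near $N$, after the colors $N\pm j$ have been paired). Each such region contributes $\mathrm{const}\cdot E(0,\tfrac{5N}{6})\,N$ with a unit-modulus prefactor, and the total is proportional to $\beta+\gamma+2(-1)^{N-1}$ with $\beta=\exp(\pi i m(N+2)/4)$, $\gamma=\exp(\pi i m(N-2)/4)$. The hypotheses of the theorem are precisely a case where this quantity is nonzero (for $m\equiv2\pmod4$ and $4\mid N$ it equals $-4$); for $m\equiv4\pmod 8$ and $N$ odd it vanishes, which is why the paper draws no conclusion there. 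Your plan as written contains no step that could produce this arithmetic condition.

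A second, more technical gap: the proposed upper bound ``from $\operatorname{Re}\phi_m\le\operatorname{Re}\phi_m$ at the saddle, with strict inequality off the saddle'' is not available. The terms with $l<j$ (the $D_1$-part in the paper) are maximized near $(j,l)=(\tfrac N2,\tfrac N3)$ at size $F(\tfrac N2,\tfrac N3)=\tfrac1N E(0,\tfrac{5N}{6})O(1)$ --- only polynomially, not exponentially, below the main term --- and they carry coefficients of size up to $O(N)$ spread over $O(N^2)$ lattice points, so pointwise bounds give $N^2E(0,\tfrac{5N}{6})$, which swamps the main term $N\,E(0,\tfrac{5N}{6})$. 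The paper controls this region by a summation-by-parts/telescoping identity in the $l$-variable (Lemma \ref{lm05}), exploiting cancellation rather than absolute values; some such argument is indispensable and is absent from your plan.
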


According to the survey \cite{Murakami}, the volume conjecture has been so far established for the following knots:

\begin{itemize}
\item $4_1$ (by Ekholm),

\item $5_2, 6_1, 6_2$ (by Y. Yokota),
\item torus knots (by Kashaev and Tirkkonen
\cite{KT}), and
\item Whitehead doubles of torus knots of type $(2, b)$ (by  Zheng \cite{Zh}).
\end{itemize}
We add to this the following result.

\begin{Thm}Suppose $\cE$ is the figure 8 knot.  Then the volume conjecture holds true for the knot $\cE^{(m,2)}$ for every odd number $m$.
\label{4}
\end{Thm}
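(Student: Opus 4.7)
The plan is to reduce the statement to Ekholm's theorem for the figure~8 knot itself via the $(m,2)$-cabling formula, and then to control the resulting finite sum by a saddle-point argument. Because $m$ is odd, $\cE^{(m,2)}$ is a single-component knot and its Kashaev invariant is generically non-zero; what must be shown is that the growth rate of $|\la\cE^{(m,2)}\ra_N|$ matches $\Vol(\cE)/(2\pi)$, and we already know $\Vol(\cE^{(m,2)})=\Vol(\cE)$.

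The first step is a cabling formula for $J_{\cE^{(m,2)}}(N;q)$. Decomposing $V_N\otimes V_N=\bigoplus_{j=1}^{N}V_{2j-1}$, evaluating the action of the half-twist $\sigma_1^m$ on each isotypic component, and correcting for the $0$-framing of the cable produces an expression of the form
\[
J_{\cE^{(m,2)}}(N;q)=q^{a(N,m)}\sum_{j=1}^{N}\epsilon_j^{m}\,q^{m\,((2j-1)^2-1)/4}\,J_{\cE}(2j-1;q),
\]
with an explicit framing prefactor $q^{a(N,m)}$ and signs $\epsilon_j=\pm1$ dictated by the $R$-matrix. Specialising to $q=\exp(2\pi i/N)$ and dividing by $[N]$ represents $\la\cE^{(m,2)}\ra_N$ as a finite signed sum of values $J_\cE(2j-1;q)/[N]$ weighted by unimodular twist phases. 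The standard root-of-unity symmetry $J_\cE(2N-k;q)=\pm J_\cE(k;q)$ folds the range to $2j-1\le N$, and a distinguished ``diagonal'' term (the one with $2j-1$ nearest to $N$) equals, up to an explicit phase, $\la\cE\ra_N$ itself.

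The second, analytic, step is to control $|J_\cE(k;e^{2\pi i/N})|$ for $k$ of the form $2j-1$. Starting from Habiro's cyclotomic formula for the figure~8 colored Jones polynomial,
\[
J_\cE(k;q)=\sum_{n=0}^{k-1}\prod_{\ell=1}^{n}(q^{k/2}-q^{-k/2}q^{-\ell})(q^{-k/2}-q^{k/2}q^{\ell}),
\]
one carries out a steepest-descent analysis in the inner variable $n$, uniformly in the ratio $k/N$. This should give $\log|J_\cE(k;e^{2\pi i/N})|=\tfrac{N}{2\pi}\Phi(k/N)+O(\log N)$, where $\Phi$ is the real part of the Chern--Simons potential of the figure~8 knot and is strictly maximal at $k/N=1$ with value $\Vol(\cE)$. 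This extends Ekholm's computation, which only addresses $k=N$.

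Plugging this uniform asymptotic into the cabling sum and applying a standard Laplace/stationary-phase estimate shows that the index with $2j-1$ closest to $N$ controls the sum; the odd parity of $m$ ensures that the sign factors $\epsilon_j^{m}$ do not produce systematic cancellation at the dominant index, while the quadratic twist phases are purely oscillatory and of modulus one. One thus obtains $\log|\la\cE^{(m,2)}\ra_N|=\tfrac{N}{2\pi}\Vol(\cE)+o(N)$, proving the theorem. The hardest part will be the second step: upgrading Ekholm's single-color estimate to a uniform one in $k$, and then verifying that the quadratic twist phases $q^{m((2j-1)^2-1)/4}$ do not shift or destroy the critical point of the combined phase function. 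The parity hypothesis on $m$ enters precisely here, in a way that parallels both the exact vanishing in Theorem~\ref{1} and the congruence restrictions required in Theorem~\ref{3}.
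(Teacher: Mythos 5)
There is a genuine gap, and it starts at the very first step. At the Kashaev point $q^{1/4}=\exp(\pi i/2N)$ one has $[N]=0$, and consequently the whole cabling sum $\sum_{l}\mu_l^m J_\cE(2l-1;q)$ evaluates to $0$ (it equals $[N]$ times a Laurent polynomial). So your plan to ``divide by $[N]$'' term by term and study the numbers $J_\cE(2l-1;q)/[N]$ is ill-defined: each numerator is generically nonzero while the denominator vanishes, and only the total is $0/0$. The Kashaev invariant has to be extracted as an exact polynomial quotient, and this is precisely what the paper does: it pairs the colors $N+j$ and $N-j$ inside Habiro's cyclotomic expansion, proves that the paired combination is divisible by $[N]$, and computes the quotient $D(j,l)$ in closed form (Lemma \ref{lm01} and Equation \eqref{eq01}, working modulo $\{N\}^2$). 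Nothing in your proposal plays this role. A second consequence you miss: after this division, the range $j$ near $N$ (colors near $0$ and $2N$, where the unnormalized $J_\cE$ is tiny --- e.g.\ $J_\cE(2N-1;q)=-1$) contributes at the \emph{same} exponential order as $j$ near $0$, because the smallness of the numerator is cancelled by the vanishing denominator. There are therefore two interfering dominant regions, not one ``diagonal'' term, and their leading terms combine into the factor $\beta+\gamma+2(-1)^{N-1}$ of Equation \eqref{eq16}, which really does vanish for certain $(m,N)$ (Lemma \ref{lm06}; this is why Theorems \ref{1} and \ref{3} have parity restrictions). Your sentence asserting that odd $m$ ``does not produce systematic cancellation'' is exactly the statement that must be proved; the actual mechanism is that $\beta+\gamma=0$ when $m$ is odd, forcing $|\beta+\gamma+2(-1)^{N-1}|=2$.

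The analytic step is also not viable as stated. The proposed uniform estimate $\log|J_\cE(k;e^{2\pi i/N})|=\tfrac{N}{2\pi}\Phi(k/N)+O(\log N)$ with $\Phi$ maximal at $k/N=1$ is assumed rather than proved, and it cannot be obtained by ``extending Ekholm'': Ekholm's argument rests on the fact that at $k=N$ every term $\prod_{j\le l}\{N-j\}\{N+j\}=\prod_{j\le l}|2\sin(j\pi/N)|^2$ is positive, so the sum is bounded below by its largest term. For $k\neq N$ the summands alternate in sign and exhibit massive cancellation, so a lower bound of this shape is a hard stationary-phase problem in its own right; moreover $J_\cE(N;e^{2\pi i/N})=0$ exactly, so the quantity you want to call $\Phi(1)$ concerns the normalized invariant only. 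The paper sidesteps all of this: after the exact quotient is taken, the dominant part $D_2(j,l)=\pm2S'(j-l,j+l)$ has constant sign in each region (Lemma \ref{lm02}), the maximum is located at the interior point $(j,l)=(0,5N/6)$ of a two-dimensional sum, and the only oscillation left is the Gaussian phase $q^{mj^2/8}$, whose effect is the explicit nonzero constant $C=\tfrac12\int_{\BR^2}\exp(\tfrac{\pi im}{4}x^2-\pi\sqrt3(x^2+y^2))\,dx\,dy$ of Proposition \ref{prop01}. Your outline would need all three missing ingredients --- the exact division, the second dominant region and its interference phases, and a sign/positivity structure replacing the unproved uniform asymptotics --- before it could be completed.
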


Actually, we will prove some generalizations of Theorems \ref{1}-\ref{4}.

\bigskip
{\bf Remark} 1. We arrived at the theorems through the symmetry principle studied in \cite{KM,Le}, although we will not use the symmetry here.
One important tool in our proof is the Habiro expansion of the colored Jones polynomial \cite{Ha}, which has been instrumental in integrality of
the Witten-Reshetikhin-Turaev invariant of 3-manifolds (see \cite{Ha,BL,Le3}) and in the proof of a generalization of the volume conjecture for
small angles \cite{GL}.

2.  The odd colors correspond to the representations of the group $SO(3)$, or representations of $sl_2$ with highest weights in the root lattice.

\subsection{Plan of the paper} In Section \ref{cable} we exactly formulate the more general results that we want to prove. Sections \ref{calculations} contains some
elementary calculations involving the building blocks in the Habiro expansion. Sections \ref{proof11} and \ref{secn03} contain the proof of the main theorems.

\subsection{Acknowledgements}
We would like to thank R. van der Veen for his interest in this work and for correcting some typos in the manuscript of the paper. We also thank the referee for some suggestions. 

\section{Cables, the colored Jones polynomial, and results}\label{cable}

\subsection{Cables, the colored Jones polynomial} Suppose $K$ is a knot with 0 framing and $m,p$ are two integers with $d$ their greatest common divisor. The $(m,p)$-cable $K^{(m,p)}$ of $K$ is the
link consisting of $d$ parallel copies of the  $(m/d,p/d)$-curve on the torus boundary of a tubular neighborhood of $K$. Here an $(m/d,p/d)$-curve is a curve that is homologically
equal to $m/d$ times the meridian  and $p/d$ times the longitude on the torus boundary.
The cable $K^{(m,p)}$ inherits an orientation from $K$, and we assume that each component of $K^{(m,p)}$ has framing 0.

The colored Jones polynomial is a special case of tangle invariants defined using ribbon Hopf algebras and their modules \cite{RT}.  The ribbon Hopf algebra in our case
is the quantized enveloping algebra $U_h(sl_2)$, e.g. \cite{Ohtsuki}. For each positive integer $n$, there is a unique irreducible $U_h(sl_2)$-module $V_n$ of rank $n$.  In \cite{Ohtsuki} our $J_K(n_1,\dots, n_m;q)$ is denoted by $Q^{sl_2; V_{n_1}, \dots, V_{n_m}}(K)$.

The calculation of $J_{K^{(m,2)}}(N;q)$ is standard: one decomposes $V_N \otimes V_N$ into irreducible components
$$V_N \otimes V_N = \oplus_{l=1}^N V_{2l-1}.$$
Since the $R$-matrix commutes with the actions of the quantized algebra, it acts on each component $V_{2l-1}$ as a scalar $\mu_l$ times the identity. The value of $\mu_l$ is well-known:
$$\mu_l = (-1)^{N-l} q^{\frac{1-N^2}{2}} q^{\frac{l(l-1)}{2}}.$$
Hence from the theory of quantum invariants, we have
$$J_{K^{(m,2)}}(N;q) = \sum_{l=1}^N \mu_l^m J_K(2l-1;q).$$

 The symmetry of quantum invariant at roots of unity \cite{KM, Le} prompts us to combine the color $N-j$ with $N+j$.   So we  rewrite the above formula as follows
\begin{equation}
J_{K^{(m,2)}}(N;q) = a_N^m \sum_{j=1-N, N-j+1 \text{ even}}^{N-1} t_{j,N}^m J_K(N+j;q),
\label{eq10}
\end{equation}
where
$$t_{j,N}= i^{N-1-j}\, q^{\frac{(N+j)^2}{8}} \quad   \text{ and }\quad  a_N=q^{(3-4N^2)/8}.$$

\subsection{General knot case and even $m$} Here we relate the Kashaev invariant of $K^{(m,2)}$ and the colored Jones polynomial of $K_{m/2}$, which is the same knot $K$, only
with framing $m/2$. Increasing the framing by 1 has the effect of multiplying the invariant by $q^{(N^2-1)/4}$, hence
$$ J_{K_{p}}(N;q) = q^{p \frac{N^2-1}{4}} J_K(N;q).$$

\begin{theorem}
Suppose one of the following:

(i) $ m\equiv 0 \pmod 4$ and $N$ is even.

(ii) $ m\equiv 2 \pmod 4$ and $ N\equiv 2 \pmod 4$.
Then, with $q^{1/4} = \exp(\pi i /2N)$, one has
$$ \la K^{(m,2)} \ra_N = q^{m/2} (q^{1/2} -q^{-1/2}) \frac{mN}{4}\sum_{j=1}^{N/2} J_{K_{m/2}}(2j-1;q).$$
In particular, if $m=0$ and $N$ is even, then $\la K^{(m,2)} \ra_N=0$.
\label{11}
\end{theorem}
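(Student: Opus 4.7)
The plan is to reduce $\la K^{(m,2)}\ra_N$ to the stated sum via an exact polynomial identity followed by a delicate $0/0$ limit at the root of unity, with the parity hypotheses of (i) and (ii) entering only at the final symmetry step. First I would combine formula \eqref{eq10} with the framing identity $J_{K_{m/2}}(n;q) = q^{(m/2)(n^2-1)/4}J_K(n;q)$ and the algebraic rewriting $a_N^m t_{j,N}^m = q^{m(1-N^2)/2}\,i^{m(N-1-j)}\,q^{m((N+j)^2-1)/8}$. Under either hypothesis $m$ and $N$ are both even, so the admissible $j$ are odd, the exponent $m(N-1-j)$ is divisible by four, and $i^{m(N-1-j)}=1$. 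Re-indexing by $l=(N+j+1)/2$ yields the exact identity
\[
J_{K^{(m,2)}}(N;q) = q^{m(1-N^2)/2}\sum_{l=1}^{N} J_{K_{m/2}}(2l-1;q).
\]
At the Kashaev root $q^{1/4}=\exp(\pi i/(2N))$ one has $q^N=1$ and $q^{N/2}=-1$, so the prefactor simplifies to $q^{m/2}$ and the task reduces to evaluating $\lim T(q)/[N]$ for $T(q):=\sum_{l=1}^{N}J_{K_{m/2}}(2l-1;q)$. Both $T$ and $[N]$ vanish at the root; the vanishing of $T$ comes by pairing $l$ with $N+1-l$ and using $J_{K_{m/2}}(2N-n;q) = -J_{K_{m/2}}(n;q)$ there, which follows from $J_K(2N-n;q)=-J_K(n;q)$ and $q^{(m/2)N(N-n)}=1$.

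To extract the $0/0$ limit I would apply the algebraic identity
\[
[n]A(n) + [2N-n]A(2N-n) = \tfrac12[N](q^{(n-N)/2}+q^{-(n-N)/2})(A(n)+A(2N-n)) + \tfrac12(q^{N/2}+q^{-N/2})[n-N](A(n)-A(2N-n))
\]
(a direct consequence of $[N+j]+[N-j] = [N](q^{j/2}+q^{-j/2})$ and $[N+j]-[N-j] = (q^{N/2}+q^{-N/2})[j]$) to $A(n) = G(n) := q^{(m/2)(n^2-1)/4}\bar J_K(n;q)$, where the reduced invariant comes from the Habiro cyclotomic form $J_K(n;q)=[n]\bar J_K(n;q)$. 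Since $G(n)=G(2N-n)$ at the root, the quotient $(G(n)-G(2N-n))/[N]$ has a finite limit, which I would compute by setting $q^{1/2}=e^{\pi i/N}e^{h}$ and expanding to first order in $h$. Using the log-derivative of $H_k(n;q)=\prod_{i=1}^k\{n+i\}\{n-i\}$ together with $\psi(N+a)=\psi(a)$ for $\psi(a):=-i\cot(\pi a/N)$, one obtains per pair an expression of the form
\[
\frac{J_{K_{m/2}}(n)+J_{K_{m/2}}(2N-n)}{[N]}\bigg|_{\text{root}} = T_n\Bigl\{-D(n)\bar J_n + 2ip(N-n)\sin(\tfrac{\pi}{N})[n]\bar J_n - \tfrac{i\sin(\pi/N)}{N}[n]R_n\Bigr\},
\]
where $n=2l-1$, $p=m/2$, $T_n=q^{p(n^2-1)/4}$, $D(n)=q^{n/2}+q^{-n/2}$, $\bar J_n=\bar J_K(n;q)$ all evaluated at the root, and $R_n:=\partial_h\bar J_K(n)|_0-\partial_h\bar J_K(2N-n)|_0$.

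The closing step applies the involution $l\mapsto N/2+1-l$, equivalently $n\mapsto N-n$, on the summation index. Under it $\bar J_n$, $[n]$, and --- crucially --- $T_n$ are invariant at the root, the last invariance being equivalent to $(m/2)(N-2n)\equiv 0\pmod 4$ and holding precisely under hypotheses (i) and (ii); meanwhile $D(n)$ and $R_n$ both flip sign (the latter via the Habiro formula and $\psi(N-a)=-\psi(a)$), while $(N-n)\leftrightarrow n$. Within each pair the first and third summands cancel and the middle ones combine via $(N-n)+n=N$, giving $2ipN\sin(\pi/N)J_{K_{m/2}}(2l-1;q)$; at the fixed point $n=N/2$ (present when $N/2$ is odd) the same value is produced singly because $D(N/2)=R_{N/2}=0$ and $N-n=N/2$. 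Summing gives
\[
\la K^{(m,2)}\ra_N = q^{m/2}\,ipN\sin(\tfrac{\pi}{N})\sum_{l=1}^{N/2}J_{K_{m/2}}(2l-1;q),
\]
and the identification $ipN\sin(\pi/N) = (q^{1/2}-q^{-1/2})(mN/4)$ at the root completes the proof (with $m=0$ giving the ``in particular'' claim). The main obstacle is the first-order Habiro expansion producing the $R_n$ term and the verification of its antisymmetry $R_{N-n}=-R_n$, plus the parity bookkeeping that pins down when $T_{N-n}=T_n$ at the root --- once these elementary but delicate calculations (presumably collected in Section~\ref{calculations}) are in hand, the involution argument collapses everything as required.
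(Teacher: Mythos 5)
Your proof is correct, and its architecture is the same as the paper's: expand $J_K$ via Habiro's cyclotomic formula, pair the colors $n$ and $2N-n$ to resolve the division by $[N]$, then pair $n$ with $N-n$, with hypotheses (i)--(ii) entering exactly where you say they do --- to force the Gauss factors $T_n=T_{N-n}$ (the paper's $t_{N-j,N}^m=t_{j,N}^m$) so that the odd terms cancel and only the piece proportional to $mN$ survives and resums to $J_{K_{m/2}}$. The genuine difference is in how the $0/0$ at the root is handled: you set $q^{1/2}=e^{\pi i/N}e^{h}$ and do a first-order Taylor expansion, which produces the derivative term $R_n$ whose antisymmetry $R_{N-n}=-R_n$ you then must verify (it does hold, since $\partial_h\bigl[A(n,k)-A(2N-n,k)\bigr]$ at $h=0$ is a multiple of $q^{n}-q^{-n}$, which changes sign under $n\mapsto N-n$); the paper instead stays inside $\BZ[v^{\pm1}]$, computing congruences modulo $\{N\}^2$ (Lemma \ref{lm01}) and packaging the whole cancellation into the single algebraic identity $D(j,l)+D(N-j,l)=\tfrac{mN}{2}S(j-l,j+l)$ of Equation \eqref{eq02}. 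The two are equivalent (a first-order expansion at a simple zero of $\{N\}$ carries the same information as the class mod $\{N\}^2$), but the algebraic route avoids any limit-taking and yields the explicit function $D(j,l)$ that is reused in the proof of Theorem \ref{thm01}, whereas your route buys a cleaner intermediate statement --- the exact identity $J_{K^{(m,2)}}(N;q)=q^{m(1-N^2)/2}\sum_{l=1}^{N}J_{K_{m/2}}(2l-1;q)$ valid for all even $m$ and even $N$, which the paper only reaches implicitly at the very end --- and makes transparent that the framing change can be absorbed at the outset. Your treatment of the fixed point $n=N/2$ (present only when $N\equiv 2\pmod 4$) and the final identification $ipN\sin(\pi/N)=(q^{1/2}-q^{-1/2})\tfrac{mN}{4}$ are both correct.
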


\subsection{Figure 8 knot case} Let $\cE$ be the figure 8 knot.
We will show that the volume conjecture for $\cE^{(m,2)}$ holds true under some restrictions.

For an integer $m$ there are 4 possibilities, we list them here together with the definition of a set $S_m$:

(i) $m$ is odd. Define  $S_m = \BN$,  the set of positive integers.

(ii) $m \equiv 0 \pmod 8$.  Define $S_m= \{ N\in \BN, N \equiv 1 \pmod 2\}$, the set of odd positive integers.

(iii) $m \equiv 2 \pmod 4$. Define $S_m= \{ N\in \BN, N \not \equiv 2 \pmod 4\}$.

(iv)  $m \equiv 4 \pmod 8$. Define $S_m= \emptyset$.

\begin{theorem} Suppose $\cE$ is the figure 8 knot and $m$ is in one of first three cases (i)--(iii) listed above. Then the volume conjecture for $\cE^{(m,2)}$ holds true if
the colors are restricted to the corresponding set $S_m$, i.e. one has
$$   \lim_{N\to \infty, N \in S_m} \frac {\log  |\la \cE^{(m,2)} \ra_N|}{N} = \frac{\Vol(\cE^{(m,2)})}{2 \pi}.$$
\label{thm01}
\end{theorem}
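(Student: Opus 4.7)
The plan is to combine formula \eqref{eq10} with Habiro's cyclotomic expansion of the colored Jones polynomial of the figure 8 knot,
$$J_\cE(n;q)=\sum_{k=0}^{\infty}\prod_{i=1}^k\{n+i\}\{n-i\}, \qquad \{x\}:=q^{x/2}-q^{-x/2},$$
which is effectively a finite sum at any root of unity. Substituting this for each $J_\cE(N+j;q)$ in \eqref{eq10} and interchanging the order of summation expresses $\la\cE^{(m,2)}\ra_N$, up to an overall factor coming from $a_N^m$ and the normalization $1/[N]$, as a sum over the Habiro index $k$ whose coefficient is the finite Gauss-type inner sum
$$G_k(N)\;:=\;\sum_{j}\,i^{m(N-1-j)}\,q^{m(N+j)^2/8}\,\prod_{i=1}^k\{N+j+i\}\{N+j-i\}, \qquad q^{1/4}=e^{\pi i/2N}.$$

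The rest of the argument is a case analysis driven by the arithmetic of the quadratic phase modulo $8N$. For odd $m$ (case (i)), a linear change of variable identifies $i^{m(N-1-j)}q^{m(N+j)^2/8}$, up to an elementary $k$-independent prefactor, with a single Habiro-type quantity in $j$, thereby reducing the inner sum to essentially the original figure 8 Habiro sum for $\la\cE\ra_N$ evaluated at a color comparable to $N$. For $m\equiv 0\pmod 8$ and $N$ odd (case (ii)), the factor $i^{m(N-1-j)}$ is identically $1$ and $q^{m(N+j)^2/8}$ reduces to a controlled character in $j$ modulo $N$, which again collapses the sum. A slightly more delicate quadratic Gauss sum computation disposes of case (iii), where $m\equiv 2\pmod 4$ and $N\not\equiv 2\pmod 4$. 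In the complementary case $m\equiv 4\pmod 8$ the resulting Gauss sum would vanish identically by orthogonality, which explains why $S_m=\emptyset$ there.

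Once $G_k(N)$ is evaluated, what remains is a single sum in $k$ of the form
$$\la\cE^{(m,2)}\ra_N\;=\;C(m,N)\sum_{k}\prod_{i=1}^k\bigl|\{c_N+i\}\{c_N-i\}\bigr|\,e^{i\theta_{k,m,N}},$$
where $C(m,N)$ is an explicit prefactor of at most polynomial growth in $N$ and $c_N$ is a color comparable to $N$. This sum is structurally identical to the one that appears in the proof of the volume conjecture for $\cE$ itself: a Riemann-sum-to-integral passage and a saddle point analysis of the resulting Lobachevsky-type integral yield the exponential growth rate $v_3/\pi$. Since $\Vol(\cE^{(m,2)})=\Vol(\cE)=2v_3$ (the JSJ decomposition of $S^3\setminus\cE^{(m,2)}$ consists of a Seifert fibered cable space together with the figure 8 complement), this matches $\Vol(\cE^{(m,2)})/(2\pi)$, as required.

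The main obstacle will be the clean evaluation of $G_k(N)$ in the two even-$m$ cases (ii) and (iii), which requires a careful quadratic Gauss sum computation modulo $8N$ together with the identification of the exact residue classes of $N$ (encoded in the definition of $S_m$) for which the inner sum is non-trivial. Once this is established and the prefactor $C(m,N)$ is shown to be non-vanishing with at most polynomial size in $N$, the exponential asymptotics descend directly from the known volume conjecture analysis for the figure 8 knot itself.
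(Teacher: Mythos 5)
Your overall starting point matches the paper's: substitute Habiro's cyclotomic expansion of $J_\cE$ (with $C_\cE(l;q)=1$) into the cabling formula \eqref{eq10} and exchange the order of summation. But the central step of your plan --- a closed-form evaluation of the inner sum $G_k(N)$ by quadratic Gauss sum techniques --- does not work, and this is where the real content of the theorem lives. The summand of $G_k(N)$ is not a character in $j$: the factor $\prod_{i=1}^k\{N+j+i\}\{N+j-i\}$ has a magnitude that varies enormously with $j$ (it is a product of sines), so orthogonality of quadratic phases modulo $8N$ says nothing about the sum, and no linear change of variable turns the phase $i^{m(N-1-j)}q^{m(N+j)^2/8}$ times this product into ``the original figure 8 Habiro sum.'' The asymptotics are genuinely two-dimensional: writing the double sum over $(j,l)$, the magnitude $E(j,l)=\bigl|\prod\cdots\bigr|$ is maximized at the single point $(j,l)=(0,\tfrac{5N}{6})$ (and its mirror $j=N$), and one must run a two-variable Laplace/Fresnel analysis there, obtaining a leading term $C\,E(0,\tfrac{5N}{6})\,N$ with $C=\tfrac12\int_{\BR^2}\exp\bigl(\tfrac{\pi i m}{4}x^2-\pi\sqrt3(x^2+y^2)\bigr)\,dx\,dy\ne 0$. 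This is also where your last step is too quick: the sum you end with carries oscillatory phases $e^{i\theta_{k,m,N}}$, and proving that these do not cause cancellation of the exponentially large main term \emph{is} the theorem; it cannot be outsourced to ``the known analysis for $\cE$ itself,'' where all terms are positive.

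The arithmetic of $m$ and $N$ (the definition of $S_m$) also enters differently from what you describe. It does not make a Gauss sum vanish or collapse; rather, after pairing the contributions of $j$ with $N-j$ and the range $j<N/2$ with $j\ge N/2$, the leading asymptotic term acquires the coefficient $\tfrac12\bigl(\beta+\gamma+2(-1)^{N-1}\bigr)$ with $\beta=e^{\pi i m(N+2)/4}$, $\gamma=e^{\pi i m(N-2)/4}$, and the whole case analysis reduces to determining when this number is nonzero (and then bounded below by $2$). In particular, for $m\equiv 4\pmod 8$ the coefficient of the \emph{leading} term vanishes, but nothing forces the invariant itself to vanish --- the subleading contributions (the $D_1$ part, the region $l<j$, the $j=0$ term) survive and are only bounded by $N^{3\alpha-1}E(0,\tfrac{5N}{6})$. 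This is why the paper explicitly leaves case (iv) open, whereas your proposal asserts identical vanishing ``by orthogonality''; that claim is unsupported and almost certainly false as stated. To repair the argument you would need (a) the pairing identity $D(N-j,l)+D(j,l)$ and the decomposition $D=D_1+D_2$, (b) the two-dimensional saddle analysis at $(0,\tfrac{5N}{6})$ with explicit error exponents, and (c) upper bounds showing the $l<j$ and $D_1$ contributions are $O(N^{3\alpha-1}E(0,\tfrac{5N}{6}))$, none of which are present in the proposal.
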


The proof of the theorem will be given in section \ref{secn03}. Theorems \ref{2}, \ref{3}, and \ref{4} are parts of this theorem.
We still don't have any conclusion for the case (iv).

\section{Some elementary calculations}
\label{calculations}

\subsection{Notations and conventions}
\label{secn01}

We will work with the variable $q^{1/4}$. Let $v= q^{1/2}$. We will use the following notations. Here $j,k,l,N$ are integers.
$$ \{j\} := v^j -v^{-j}, \quad [j]:= \{j\}/\{1\}, \quad S(k,l):= \prod_{k \le j \le l} \{j\}, \quad S'(k,l) :=  \prod_{k \le j \le l, ~j \notin \{0,N\}} \{j\}.$$
$$A(j,k) := \{j-k\} \{j+k\}, \quad {\prod_{i \in I}}' a_i := \sum_{i \in I} \prod_{j \in I \setminus \{i\}} a_j, \quad \text{and} \quad t_{j,N} :=  i^{N-1-j}\, q^{\frac{(N+j)^2}{8}}.$$
For $f,g$ and $h$ in $\BZ[v^{\pm 1}]$, the equation $ f \equiv g \pmod {h}$ means that $f-g$ is divisible by $h$.


\subsection{The building blocks $A(N,k)$ and $S(k,l)$} The expressions $A(N,k)$ and $S(k,l)$ will be the building blocks in the Habiro expansion. We will prove here a few simple
facts.
\begin{lemma}  One has
\begin{eqnarray*}
	A(N-j,k) &\equiv & A(N+j,k) + 2 \{2j\} \{N\} \{N-1\}/\{1\} \pmod{\{N\}^2} \quad \text{in  } \BZ[v^{\pm 1}], \\
	\{N-j\}  &=& -\{N+j\} + \{N\} (v^j + v^{-j}),\\
	t_{-j,N}^m &=& t_{j,N}^m  + \frac{mj}{2}\, t_{j,N}^m\, \{N\} + t_{j,N}^m (v^N +1)^2\,  Q, \quad \text{where } Q \in \BQ[v^{\pm 1}].
\end{eqnarray*}

\label{lm01}
\end{lemma}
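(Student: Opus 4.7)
I would prove the three assertions in the order (2), (1), (3), since (1) uses (2) as a substitution rule and (3) is independent. Identity (2) is a one-line expansion: $\{N\}(v^j+v^{-j}) = v^{N+j}+v^{N-j}-v^{-N+j}-v^{-N-j}$, so subtracting $\{N+j\} = v^{N+j}-v^{-N-j}$ leaves $v^{N-j}-v^{-N+j} = \{N-j\}$, as required.

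For (1), I apply (2) to each factor of $A(N-j,k)=\{N-j-k\}\{N-j+k\}$, obtaining
\begin{align*}
\{N-j-k\} &= -\{N+j+k\} + \{N\}(v^{j+k}+v^{-j-k}),\\
\{N-j+k\} &= -\{N+j-k\} + \{N\}(v^{j-k}+v^{-j+k}).
\end{align*}
Expanding the product and grouping by powers of $\{N\}$, the constant term is $A(N+j,k)$ and the quadratic term is absorbed modulo $\{N\}^2$. The linear coefficient, after straightforward monomial arithmetic, simplifies to $2\{N+2j\} + \{N+2k\} + \{N-2k\}$. Applying (2) once more gives $\{N+2k\}+\{N-2k\} = \{N\}(v^{2k}+v^{-2k})$, which contributes only to the $\{N\}^2$ part, so
$$A(N-j,k) \equiv A(N+j,k) - 2\{N\}\{N+2j\} \pmod{\{N\}^2}.$$
To match this with the stated right-hand side it suffices to verify $-2\{N+2j\} \equiv 2\{2j\}\{N-1\}/\{1\} \pmod{\{N\}}$; multiplying by $\{N\}$ then upgrades the congruence to mod $\{N\}^2$. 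But modulo $\{N\}$ one has $v^N \equiv v^{-N}$, whence $\{N+2j\} \equiv v^{-N}\{2j\}$ and $\{N-1\}/\{1\} \equiv -v^{-N}$, so the two sides coincide.

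For (3), the ratio computes directly:
$$t_{-j,N}/t_{j,N} = i^{2j}\, v^{[(N-j)^2-(N+j)^2]/4} = (-1)^j v^{-Nj},$$
so $(t_{-j,N}/t_{j,N})^m = (-1)^{mj} v^{-Nmj}$. Setting $y := v^N+1$ and Taylor-expanding modulo $y^2$, one finds $v^{-N} \equiv -1-y$, hence $(-1)^{mj}v^{-Nmj} \equiv (1+y)^{mj} \equiv 1 + mj\,y \pmod{y^2}$. Since also $\{N\} = v^N - v^{-N} \equiv 2y \pmod{y^2}$, the element $\tfrac{mj}{2}\{N\}$ equals $mj\,y$ modulo $y^2$, so $(t_{-j,N}/t_{j,N})^m - 1 - \tfrac{mj}{2}\{N\}$ is divisible by $(v^N+1)^2$ in $\BQ[v^{\pm 1}]$. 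Multiplying through by $t_{j,N}^m$ gives the stated formula with $Q \in \BQ[v^{\pm 1}]$; the appearance of $\BQ$ rather than $\BZ$ is precisely due to the factor $mj/2$.

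The only real obstacle is the bookkeeping in (1): the coefficient $2\{2j\}\{N-1\}/\{1\}$ emerges only after the two-stage reduction, first mod $\{N\}^2$ to drop the quadratic cross term and then mod $\{N\}$ to rewrite $-2\{N+2j\}$, and one must track the $\pm k$-symmetric contributions in the linear coefficient carefully to ensure they combine exactly into $\{N+2k\}+\{N-2k\}$. Beyond this, all three identities are mechanical manipulations in the appropriate Laurent polynomial ring.
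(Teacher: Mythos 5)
Your parts (2) and (3) are correct and essentially identical to the paper's argument: the paper proves (3) via the identity $1-a^k=k(1-a)+(1-a)^2P(a)$ applied to $a=-v^{\mp N}$ (with the same case split on the sign of $mj$), which is exactly your expansion modulo $y^2$ with $y=v^N+1$.

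For part (1) you take a genuinely different, and considerably more laborious, route. The paper never touches the individual factors of $A(N\pm j,k)$: since $A(j,k)=q^j+q^{-j}-q^k-q^{-k}$ depends on $j$ only through $q^j+q^{-j}$, the $k$-dependence cancels and the difference collapses in one line to $A(N-j,k)-A(N+j,k)=-\{2j\}\{2N\}=-\{2j\}\{N\}(v^N+v^{-N})$, after which the single rewriting $(v^N+v^{-N})\{1\}=\{N\}(v+v^{-1})-2\{N-1\}$ finishes the proof. Your factor-by-factor expansion does reach the correct intermediate congruence $A(N-j,k)\equiv A(N+j,k)-2\{N\}\{N+2j\}$ (note only that the coefficient of $\{N\}$ in your expansion is $-\bigl(2\{N+2j\}+\{N+2k\}+\{N-2k\}\bigr)$ rather than its negative; your displayed formula already carries the right sign). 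The one step that needs repair is the final reduction: the assertion $\{N-1\}/\{1\}\equiv -v^{-N}\pmod{\{N\}}$ is false as a congruence in $\BZ[v^{\pm1}]$, because $\{1\}$ divides $\{N\}$ and is a zero divisor in the quotient, so one may not "divide by $\{1\}$" there. For instance, with $N=2$ the element $[1]+v^{-2}=v^{-2}(v^2+1)$ is not divisible by $\{2\}=v^{-2}(v^2-1)(v^2+1)$. What is true, and what you actually need, is the exact identity $\{N-1\}=-v^{-N}\{1\}+v^{-1}\{N\}$; multiplying by $[2j]=\{2j\}/\{1\}\in\BZ[v^{\pm1}]$ gives $\{2j\}\{N-1\}/\{1\}=-v^{-N}\{2j\}+v^{-1}[2j]\{N\}\equiv -v^{-N}\{2j\}\equiv -\{N+2j\}\pmod{\{N\}}$, which is the congruence you wanted. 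With that one-line patch your argument is complete; what the paper's approach buys is the avoidance of all this bookkeeping, since the difference of the two $A$'s is computed exactly rather than modulo $\{N\}^2$.
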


\begin{proof} The second equality follows directly from definition. For the first one, by noting that $\{-j\}=-\{j\}$ and $A(j,k)=q^j+q^{-j}-q^k-q^{-k}$ we have
\begin{eqnarray*}
	A(N-j,k) - A(N+j,k) &=& -\{2j\} \{2N\} \\
						&=& -\{2j\} \{N\} (v^N+v^{-N})(v-v^{-1})/\{1\} \\
						&=& -\{2j\} \{N\} [\{N\}(v+v^{-1})-2\{N-1\}]/\{1\} \\
						&\equiv & 2 \{2j\} \{N\} \{N-1\}/\{1\} \pmod{\{N\}^2}.
\end{eqnarray*}

To prove the last one, we note that for a positive integer $k$ there is a polynomial $P(a)$, whose coefficients depend on $k$ only, such that $1-a^k=k(1-a)+(1-a)^2P(a)$. Hence if $mj \ge 0$ then
\begin{eqnarray*}
	t_{-j,N}^m - t_{j,N}^m	&=& -t_{j,N}^m [1-(-v^{-N})^{mj}] \\
						&=& -t_{j,N}^m [(1+v^{-N})mj+(1+v^{-N})^2P(v^{-1})] \\
						&=& -t_{j,N}^m [\frac{mj}{2} (v^{-N} (1+v^N)^2 - \{N\})+(1+v^{-N})^2P(v^{-1})] \\
						&=& \frac{mj}{2}\, t_{j,N}^m\, \{N\} + t_{j,N}^m (v^N +1)^2 [-\frac{mj}{2}\, v^{-N} + v^{-2N}P(v^{-1})],	\end{eqnarray*}
which asserts the third equality. Otherwise, i.e. if $mj<0$, we have
\begin{eqnarray*}
	t_{-j,N}^m - t_{j,N}^m	&=& -t_{j,N}^m [1-(-v^{N})^{-mj}] \\
						&=& -t_{j,N}^m [-(1+v^{N})mj+(1+v^{N})^2P(v)] \\
						&=& -t_{j,N}^m [-\frac{mj}{2} (v^{-N} (1+v^N)^2 + \{N\})+(1+v^{N})^2P(v)] \\
						&=& \frac{mj}{2}\, t_{j,N}^m\, \{N\} + t_{j,N}^m (v^N +1)^2 [\frac{mj}{2}\, v^{-N} - P(v)].	\end{eqnarray*}
						This completes the proof of the lemma.
\end{proof}

\begin{lemma} Suppose $v^N=-1$, then
\begin{equation}
\frac{\displaystyle{t_{j,N}^m [N+j] (\prod_{k=1}^l A(N+j,k)) + t_{-j,N}^m [N-j] \prod_{k=1}^l A(N-j,k) }}{[N]} = t_{j,N}^m D(j,l),
\label{eq01}
\end{equation}
where
$$ D(j,l)= (v^j + v^{-j}) \left( (\prod_{k=1}^l A(j,k)) + 2\{j\}^2  {\prod_{1 \le k \le l}}' A(j,k) \right) + \frac{mj}{2}\{j\} \prod_{k=1}^l A(j,k).$$
\end{lemma}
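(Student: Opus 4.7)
The plan is to combine the two summands on the left of \eqref{eq01} using all three congruences from Lemma \ref{lm01}, and then extract the coefficient of $\{N\}$ after observing that the $\{N\}^0$ terms cancel. Write $P = \prod_{k=1}^{l} A(N+j,k)$. The three congruences from Lemma \ref{lm01} let me rewrite each of the three factors appearing in the $-j$-summand as the corresponding factor for $+j$ plus a correction of order $\{N\}$:
\begin{align*}
t_{-j,N}^m &\equiv t_{j,N}^m + \tfrac{mj}{2}\,t_{j,N}^m\{N\} \pmod{(v^N+1)^2},\\
[N-j] &= -[N+j] + (v^j+v^{-j})[N],\\
\prod_{k=1}^{l} A(N-j,k) &\equiv P + \tfrac{2\{2j\}\{N-1\}}{\{1\}}\{N\}\,{\prod_{k=1}^{l}}' A(N+j,k) \pmod{\{N\}^2}.
\end{align*}

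Next I would substitute these three expressions into $t_{-j,N}^m[N-j]\prod_{k} A(N-j,k)$ and expand, keeping only terms up to order $\{N\}$. The crucial cancellation is that the zeroth-order contribution equals $-t_{j,N}^m[N+j]P$, which is exactly the negative of the first summand. What remains is a sum of three first-order terms: one from the correction in $[N-j]$ (giving $t_{j,N}^m(v^j+v^{-j})[N]P$), one from the correction in $\prod_k A(N-j,k)$, and one from the correction in $t_{-j,N}^m$. I then divide by $[N]$, using the identity $\{N\}/[N]=\{1\}$, and finally evaluate at $v^N=-1$. Under this specialization, $\{N+j\}=-\{j\}$ so $[N+j]=-\{j\}/\{1\}$; $\{N-1\}=\{1\}$; the two sign flips in $\{N+j\pm k\}$ compensate, giving $A(N+j,k)=A(j,k)$; and $\{2j\}=(v^j+v^{-j})\{j\}$. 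Plugging these in and collecting the three pieces reproduces the three summands defining $D(j,l)$ (with the factor $2\{j\}\{2j\}$ becoming $2(v^j+v^{-j})\{j\}^2$).

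The only real subtlety, and the part needing the most care, is the bookkeeping of error terms. The first and third congruences of Lemma \ref{lm01} are taken modulo $\{N\}^2$ and $(v^N+1)^2$ respectively, and the second is exact. Since $\{N\}=v^{-N}(v^N-1)(v^N+1)$ and $(v^N-1)$ is a unit in any neighborhood of $v^N=-1$, the ideals $(\{N\}^2)$ and $((v^N+1)^2)$ agree after localization at $v^N=-1$; thus any remainder hidden in the three congruences is of the form $[N]\cdot(v^N+1)\cdot(\text{regular})$, which vanishes upon dividing by $[N]$ and evaluating at $v^N=-1$. Once this point is acknowledged, the proof is a direct expand-and-collect calculation driven entirely by Lemma \ref{lm01}.
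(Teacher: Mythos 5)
Your proposal is correct and follows essentially the same route as the paper's proof: substitute the three identities of Lemma \ref{lm01} into the $-j$ summand, observe that the zeroth-order term cancels $t_{j,N}^m[N+j]\prod_k A(N+j,k)$, divide the three first-order terms by $[N]$, and specialize at $v^N=-1$ using $A(N+j,k)=A(j,k)$ and $\{N+j\}=-\{j\}$. Your extra remark on why the remainders modulo $\{N\}^2$ and $(v^N+1)^2$ disappear after dividing by $[N]$ and evaluating at $v^N=-1$ is a point the paper leaves implicit, and it is handled correctly.
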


\begin{proof} From lemma \ref{lm01}, we have
	\begin{eqnarray*}
	\prod_{k=1}^l A(N-j,k) &\equiv & \prod_{k=1}^l [A(N+j,k) + 2 \{2j\} \{N\}\{N-1\}/\{1\}] \pmod{\{N\}^2} \\
						   &\equiv & \prod_{k=1}^l A(N+j,k) + \\
						   &+& 2 \{2j\} \{N\} \{N-1\}/\{1\} {\prod_{1 \le k \le l}}' A(N+j,k) \pmod{\{N\}^2}.
	\end{eqnarray*}
This, together with the last two equalities in lemma \ref{lm01}, implies that when $v^N=-1$ the left hand side of \eqref{eq01} is equal to 	
\begin{eqnarray*}
	&-& \frac{mj}{2}\, t_{j,N}^m \{N+j\} (\prod_{k=1}^l A(N+j,k)) +t_{j,N}^m (v^j+v^{-j}) (\prod_{k=1}^l A(N+j,k)) \\
	&-& 2 \{2j\} \{N-1\} / \{1\} {\prod_{1 \le k \le l}}' A(N+j,k) t_{j,N}^m \{N+j\}
\end{eqnarray*}
Hence \eqref{eq01} follows from the facts that $A(N+j,k) = A(j,k)$ and $\{N+j\} = -\{j\}$ if $v^N=-1$.
\end{proof}

\begin{lemma} One has $D(j,l)=D_1(j,l) + D_2(j,l)$, where
    $$ D_1(j,l)= \begin{cases} \left( \frac{mj}{2}   + \frac{v^j + v^{-j}}{\{j\}} + 2 \{2j\} \sum_{k=1}^l \frac{1}{A(j,k)} \right) S(j-l,j+l), \quad & \text{if } l < \min(j,N-j),\\
0 & \text{if } l \ge \min(j,N-j),
             \end{cases} $$
     and
     $$ D_2(j,l) =\begin{cases}   2  S'(j-l,j+l) \quad &\text{if } j \le l <  N-j,\\
 -2 S'(j-l,j+l) \quad &\text{if } N-j\le l < j,\\
  0 \quad &  \text{if } l <\min(j,N-j) \text{ or } l \ge \max(j,N-j).\end{cases}$$
\label{rem01}
\end{lemma}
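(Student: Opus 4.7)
The plan is to rewrite $D(j,l)$ in terms of the two elementary objects
$$P := \prod_{k=1}^{l} A(j,k), \qquad P' := {\prod_{1\le k\le l}}' A(j,k),$$
so that
$$D(j,l) = (v^j+v^{-j})\bigl(P + 2\{j\}^2 P'\bigr) + \tfrac{mj}{2}\{j\}P,$$
and then to identify $P$ and $P'$ in closed form case by case. The single key identity that I would establish first is
$$\{j\}\cdot P = S(j-l,j+l),$$
obtained by noting that $A(j,k)=\{j-k\}\{j+k\}$, so that the left-hand side is simply $\prod_{m=j-l}^{j+l}\{m\}$. Using $(v^j+v^{-j})\{j\}=\{2j\}$, this already turns the $(v^j+v^{-j})P$ and $\frac{mj}{2}\{j\}P$ contributions into the shape appearing in $D_1$.

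Next I would observe that under $v^N=-1$, a factor $A(j,k)$ vanishes precisely when $k\equiv\pm j\pmod N$, so in the range $1\le k\le l$ the only possible vanishing indices are $k=j$ (when $l\ge j$) and $k=N-j$ (when $l\ge N-j$). This naturally produces the three cases:

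\emph{Case (a): $l<\min(j,N-j)$.} Every $A(j,k)$ is nonzero, so $P'=P\sum_{k=1}^{l}1/A(j,k)$. Substituting and using $\{j\}P=S(j-l,j+l)$ together with $(v^j+v^{-j})\{j\}=\{2j\}$ gives exactly $D_1(j,l)$, while by definition $D_2(j,l)=0$.

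\emph{Case (b): $\min(j,N-j)\le l<\max(j,N-j)$.} Exactly one $A(j,k_0)$ vanishes (with $k_0=j$ if $j<N-j$, else $k_0=N-j$), so $P=0$ and $P'=\prod_{k\neq k_0}A(j,k)$. I would expand this remaining product as a product of $\{m\}$ over $m\in[j-l,j+l]\setminus\{j,k_0^{+},k_0^{-}\}$ where $k_0^{\pm}=j\pm k_0$, and recognize it as $S'(j-l,j+l)/(\{j\}\{2k_0^\pm\})$, checking case-by-case that exactly one of $0,N$ lies in the interval $[j-l,j+l]$. Substituting into $D(j,l)=2(v^j+v^{-j})\{j\}^2P'$ and cancelling, I obtain $\pm 2S'(j-l,j+l)$.

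\emph{Case (c): $l\ge\max(j,N-j)$.} If $j\neq N-j$, both $A(j,j)$ and $A(j,N-j)$ vanish, so $P=0$ and each term of $P'$ still contains a zero factor, forcing $P'=0$ and $D(j,l)=0$. If $j=N-j$, then $2j=N$, so $\{2j\}=0$, and the Case (a) calculation (extended) yields a factor of $\{2j\}$ that kills $D(j,l)$.

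The main obstacle will be the sign bookkeeping in Case (b) when $j>N-j$: the leftover factor after cancelling $\{j\}$ is $\{2j-N\}$ rather than $\{2j\}$. To extract the $-1$ needed to match $D_2$ in that regime, I would use $v^N=-1$ to show $\{2j-N\}=-\{2j\}$, so that dividing by $\{2j\}$ produces exactly the sign change asserted in the formula for $D_2$. Apart from that, the argument is essentially a careful identification of which arguments $m\in[j-l,j+l]$ survive in each of $P$, $P'$, $S(j-l,j+l)$, and $S'(j-l,j+l)$.
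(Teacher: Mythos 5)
Your plan is correct: the key identity $\{j\}\prod_{k=1}^{l}A(j,k)=S(j-l,j+l)$, the identification of the vanishing factors $k=j$ and $k=N-j$ under $v^N=-1$, the use of $(v^j+v^{-j})\{j\}=\{2j\}$, and the sign flip $\{2j-N\}=-\{2j\}$ together give exactly the asserted case analysis, including the degenerate case $j=N/2$. The paper dismisses this lemma with ``direct calculation,'' so your write-up is precisely the omitted verification and matches the intended approach.
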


\begin{proof}
This can be checked easily by direct calculations.
\end{proof}

\begin{lemma} For $j \le l \le N-j $, the sign of $S'(j-l,j+l)$ is $(-1)^j$.  For $ N-j \le l \le j$, the sign of $S'(j-l,j+l)$ is $(-1)^{N-j}$.
\label{lm02}
\end{lemma}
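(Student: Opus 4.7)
The plan is to specialize at $v = e^{\pi i/N}$ (consistent with $v^N = -1$, which is in force here), so that $\{k\} = v^k - v^{-k} = 2i\sin(\pi k/N)$ is purely imaginary whenever it is nonzero. This converts the entire assertion into a counting problem: $S'(j-l, j+l)$ becomes a power of $i$ times a product of real sines whose individual signs depend only on which of the bands $(-N, 0)$, $(0, N)$, or $(N, 2N)$ the index $k$ lies in, and the claimed sign of $S'$ should drop out of simply totting up how many indices land in each band.

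First I would record the signs of $\{k\}/i$: positive for $0 < k < N$, negative for $-N < k < 0$, and (via $\sin(\pi + \theta) = -\sin\theta$) negative for $N < k < 2N$. Second, in the case $j \le l < N-j$, the range $[j-l, j+l]$ straddles only the forbidden index $0$, with $l-j$ terms in the negative band $(-N, 0)$ and $j+l$ terms in the positive band $(0, N)$, for a total of $2l$ factors. Multiplying these out produces $(-i)^{l-j}\cdot i^{j+l}$ times a positive real, which collapses via $i^{2l} = (-1)^l$ to the asserted $(-1)^j$. Third, in the case $N-j \le l < j$, the range instead straddles the forbidden index $N$, with $N-j+l$ terms in $[j-l, N-1]$ and $j+l-N$ terms in $[N+1, j+l]$, again totalling $2l$ factors; the analogous computation gives $i^{N-j+l}(-i)^{j+l-N} \cdot (\text{positive})$, which after the same use of $i^{2l} = (-1)^l$ equals $(-1)^{N-j}$ up to a positive real factor.

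The only delicate point is the behaviour at the two boundary values $l = N-j$ and $l = j$, where one of the excluded indices ($N$ or $0$) becomes an endpoint of the range rather than an interior straddled point. Removing that one factor reduces the total count to $2l-1$, so $S'$ becomes purely imaginary rather than real and the stated ``sign'' has to be read modulo an extra power of $i$. Checking that this subtlety is harmless is the main obstacle: one inspects the downstream use inside Lemma \ref{rem01} and observes that both subcases of $D_2(j,l)$ carry a strict upper bound on $l$, so only the strictly interior values of $l$ ever intervene and the two paragraphs above apply verbatim. Apart from this bookkeeping, the proof is entirely elementary sine-and-power-of-$i$ accounting.
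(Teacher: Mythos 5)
Your argument is the same sign-counting the paper uses: the paper's entire proof is the one-line computation $i^{l+j}(-i)^{l-j}=(-1)^j$ for the first case (with the second case ``similar''), which is exactly your band-by-band tally of $\pm i$ factors. Your extra discussion of the boundary values $l=N-j$ and $l=j$ (where both excluded indices $0$ and $N$ fall in the range, leaving an odd number of imaginary factors) is a correct observation that the paper silently glosses over, and you rightly note it is harmless because the subcases of $D_2$ in Lemma \ref{rem01} exclude precisely those values.
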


\begin{proof}
If $j \le l \le N-j $ then the sign of $S'(j-l,j+l)$ is $i^{l+j}(-i)^{l-j}=(-1)^j$. For $ N-j \le l \le j$, the proof is similar.
\end{proof}

\begin{lemma} Suppose $v^N=-1$.
	For $1\le j\le N-1$ and $0\le l \le N-1$ one has
\begin{equation}
 	D(N-j,l) + D(j,l) = \frac{mN}{2} S(j-l,j+l).
 	\label{eq02}
\end{equation}

\end{lemma}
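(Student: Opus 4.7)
The plan is to verify the identity via the decomposition $D = D_1 + D_2$ supplied by Lemma \ref{rem01}, after which only a short case analysis in $l$ remains. Since both sides of \eqref{eq02} are manifestly symmetric under $j \leftrightarrow N-j$ (once one knows $S(N-j-l, N-j+l) = S(j-l, j+l)$, established below), I would reduce to $1 \le j \le N/2$, so that $\min(j, N-j) = j$ and $\max(j, N-j) = N-j$; the three cases to handle are then $0 \le l < j$, $j \le l < N-j$, and $N-j \le l \le N-1$.

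The preliminary step is to extract from $v^N = -1$ (equivalently $v^{2N} = 1$) the identities $\{N-i\} = \{i\}$ and $\{N+i\} = -\{i\}$, valid for every integer $i$. From these follow $A(N-j, k) = A(j, k)$, the relation $v^{N-j} + v^{-(N-j)} = -(v^j + v^{-j})$, the sign-flip $\{2(N-j)\} = -\{2j\}$, and, after relabeling $i \mapsto N-i$ in the defining products, $S(N-j-l, N-j+l) = S(j-l, j+l)$ as well as $S'(N-j-l, N-j+l) = S'(j-l, j+l)$.

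With these in hand the three cases are routine. In case (a), $0 \le l < j$: both $D_2$ terms vanish, and in $D_1(j, l) + D_1(N-j, l)$ the prefactors $\pm \frac{v^j + v^{-j}}{\{j\}}$ cancel, the prefactors $\pm 2\{2j\} \sum_{k=1}^{l} \frac{1}{A(j,k)}$ cancel, and only $\frac{mj}{2} + \frac{m(N-j)}{2} = \frac{mN}{2}$ survives, multiplied by the common factor $S(j-l, j+l)$. In case (b), $j \le l < N-j$: both $D_1$'s vanish, while $D_2(j, l) = 2 S'(j-l, j+l)$ and $D_2(N-j, l) = -2 S'(j-l, j+l)$ cancel; on the right-hand side, $0 \in [j-l, j+l]$ forces the factor $\{0\} = 0$ into $S(j-l, j+l)$, so both sides equal zero. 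In case (c), $l \ge N-j$: all of $D_1$ and $D_2$ vanish for both arguments, and $N \in [j-l, j+l]$ forces $\{N\} = 0$ into $S(j-l, j+l)$, so both sides are zero again.

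There is no substantive obstacle — the proof is pure bookkeeping. The one item to handle with care is the sign $\{2(N-j)\} = -\{2j\}$, which comes from $v^{2N} = 1$ rather than from $\{N-i\} = \{i\}$, and which is precisely what makes the $2\{2j\}$ prefactors in $D_1(j, l)$ and $D_1(N-j, l)$ cancel instead of add in case (a).
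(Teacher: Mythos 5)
Your proof is correct, and it takes a genuinely different route from the paper's. The paper argues directly from the definition of $D(j,l)$, with no case analysis in $l$: writing $D(j,l)=B(j,l)+\frac{mj}{2}\{j\}\prod_{k=1}^l A(j,k)$, where $B(j,l)$ denotes the first summand in the definition of $D$, it observes that $v^N=-1$ gives $B(N-j,l)+B(j,l)=0$ (because $v^{N-j}+v^{-(N-j)}=-(v^j+v^{-j})$ while $A(N-j,k)=A(j,k)$ and $\{N-j\}=\{j\}$), and that $\{N-j\}\prod_{k=1}^l A(N-j,k)=\{j\}\prod_{k=1}^l A(j,k)=S(j-l,j+l)$, so the only surviving terms sum to $\frac{m(N-j)}{2}S+\frac{mj}{2}S=\frac{mN}{2}S$. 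You instead push everything through the piecewise formulas of Lemma \ref{rem01}; this costs you the three-way split in $l$ (plus the symmetry reduction to $j\le N/2$) and makes the argument logically dependent on that lemma, which the paper itself only asserts ``by direct calculations,'' but it buys an explicit accounting of which cancellation operates in which regime --- the $\frac{v^j+v^{-j}}{\{j\}}$ and $2\{2j\}\sum_k A(j,k)^{-1}$ prefactors cancelling for $l<j$, the $\pm 2S'$ terms cancelling for $j\le l<N-j$, and both sides dying for large $l$ --- which is essentially the bookkeeping reused in the asymptotic analysis of Section \ref{secn03}. All your individual verifications check out: the symmetry of $S$ and $S'$ under $j\mapsto N-j$, the vanishing of $S(j-l,j+l)$ via the factor $\{0\}$ or $\{N\}$ in cases (b) and (c), and especially the sign $\{2(N-j)\}=-\{2j\}$ from $v^{2N}=1$, which you correctly identify as the crux of case (a).
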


\begin{proof}
Let
$$ B(j,l)=(v^j + v^{-j}) \left( (\prod_{k=1}^l A(j,k)) + 2\{j\}^2  {\prod_{1 \le k \le l}}' A(j,k) \right).$$
Then by definition, $D(j,l)=B(j,l)+\frac{mj}{2}\{j\} \prod_{k=1}^l A(j,k).$ It is easy to see that if $v^N=-1$ then $B(N-j,l)+B(j,l) = 0$ and $$\{N-j\} \prod_{k=1}^l A(N-j,k) = \{j\} \prod_{k=1}^l A(j,k) = S(j-l,j+l).$$
It implies that \eqref{eq02} holds true.
\end{proof}

\section{Proof of Theorem \ref{11}}
\label{proof11}

\subsection{Habiro expansion}
By a deep result of Habiro \cite{Ha}, there are {\em Laurent polynomials} $C_K(l;q) \in \BZ[q^{\pm 1}]$, depending on the knot $K$, such that
$$ J_K(N;q) = [N] \sum_{l=0}^{N-1} C_K(l;q) \prod_{k=1}^l A(N,k).$$
From now on let $q^{1/4}=\exp(\pi i/2N)$. Then $ v^N =-1$. Using Equation \eqref{eq01}, we have for $ 0\le j \le N-1$
$$ \frac{t_{j,N}^m J_K(N+j;q) + t_{-j,N}^m J_K(N-j;q)}{[N]} = \sum_{l=0}^{N-1} C_K(l;q) \, t_{j,N}^m \, D(j,l).$$
Hence Equation \eqref{eq10} implies that
\begin{equation}
\la K^{(m,2)} \ra_N = a_N^m     \sum_{l=0}^{N-1} C_K(l;q) \left( (\sum_{j=1, N-j+1 \text{ even}}^{N-1} t_{j,N}^m\, D(j,l)) + \frac{1-(-1)^N}{4}\,  t_{0,N}^m D(0,l)\right).
\label{eq03}
\end{equation}
\subsection{Proof of Theorem \ref{11} } We assume that $m,N$ satisfy the conditions of Theorem \ref{11}, i.e. $m\equiv 0 \pmod 4$ and $N$ is even; or $ m\equiv 2 \pmod 4$ and $ N\equiv 2 \pmod 4$.

The symmetry \cite{KM,Le} hints that we should combine $j$ and $N-j$.
Since $N$ is even, both $j$ and $N-j$ are odd, so they both appear in the sum \eqref{eq03}. Hence we rewrite the expression in the big parenthesis in right hand side of Equation \eqref{eq03} as follows 
$$(\sum_{j=1, j \text{ odd}}^{N/2-1} t_{j,N}^m D(j,l) + t_{N-j,N}^mD(N-j,l))
+\frac{1-(-1)^{N/2}}{2}\,  t_{N/2,N}^m D(N/2,l).$$
Under the assumption in the theorem on $m$ and $N$, we can easily check that $t_{N-j,N}^m=t_{j,N}^m$. Therefore it follows from Equation \eqref{eq02} that $\la K^{(m,2)} \ra_N$ is equal to $a_N^m\sum_{l=0}^{N-1} C_K(l;q)$ times 
$$(\sum_{j=1, j \text{ odd}}^{N/2-1} t_{j,N}^m \, \frac{mN}{2} S(j-l,j+l)) + \frac{1-(-1)^{N/2}}{2}\ t_{N/2,N}^m \, \frac{mN}{4} S(N/2-l,N/2+l).$$
Now, by noting that $a_N^m=q^{m(3-4N^2)/8}=q^{3m/8}$ and $$t_{N-j,N}^m=t_{j,N}^m=q^{mj^2/8}, \quad S(j-l,j+l)=S(N-j-l,N-j+l),$$ we obtain
\begin{eqnarray*}
\la K^{(m,2)} \ra_N &=&q^{3m/8} \frac{mN}{4}\sum_{l=0}^{N-1} C_K(l;q) \sum_{j=1, j \text{ odd}}^{N-1} q^{mj^2/8} S(j-l,j+l) \\
             &=&q^{m/2} \frac{mN}{4} \sum_{j=1, j \text{ odd}}^{N-1} q^{\frac{m}{2} \cdot \frac{j^2-1}{4}} \{1\} J_K(j;q) \\
           &=& q^{m/2} \{1\} \frac{mN}{4} \sum_{j=1, j \text{ odd}}^{N-1} J_{K_{m/2}}(j;q),
\end{eqnarray*}
where $K_{m/2}$ is $K$ with framing $m/2$. This proves theorem \ref{11}.

\section{Proof of theorem \ref{thm01}}
\label{secn03}

Let $\delta := \exp(\pi i/4)$. We will write $t_j$ for $t_{j,N}$. Then, with $q^{1/4}=\exp(\pi i/2N)$ one has
$$t_{j}=\delta^{3N-2}q^{j^2/8}.$$

For the figure 8 knot $\cE$, we know that $C_\cE(l,q)=1$, see \cite{Ha}. From Equation \eqref{eq03} we have
\begin{eqnarray}
\frac{\la \cE^{(m,2)} \ra_N}{\delta^{(3N-2)m}a_N^m} &=& (\sum_{l=0}^{N-1} \sum_{j=1, N-j+1 \text{ even}}^{N/2-1} q^{mj^2/8}\, D(j,l)) + (\sum_{l=0}^{N-1} \sum_{j=N/2, N-j+1 \text{ even}}^{N-1} q^{mj^2/8} \, D(j,l)) \nonumber\\
&+& \frac{1-(-1)^N}{4}\, \sum_{l=0}^{N-1} D(0,l).
\label{eq12}
\end{eqnarray}

\subsection{The case $j<N/2$}
\label{subsec01} We now consider the first sum in the right hand side of Equation \eqref{eq12}. By lemma \ref{rem01},
$$ D(j,l)= \begin{cases} D_1(j,l)=\left( \frac{mj}{2}   + \frac{v^j + v^{-j}}{\{j\}} + 2 \{2j\} \sum_{k=1}^l \frac{1}{A(j,k)} \right) S(j-l,j+l) & \text{if } l < j\\
D_2(j,l)=2  S'(j-l,j+l)  &\text{if } j \le l <  N-j\\
0 & \text{if } l \ge N-j
             \end{cases} $$
             
             We will consider two subcases when $D(j,l)\neq 0$: $j\le l <N-j$ and $j<l$.

\subsubsection{The subcase $j \le l <N-j $}  By lemma \ref{lm02}, the sign of $S'(j-l,j+l)$ is $(-1)^j=(-1)^{N-1}$. Note that $\{k\}=2i \sin \frac{k\pi}{N}$, hence $S'(j-l,j+l)=(-1)^{N-1}E(j,l)$, where
$$E(j,l)=(\prod_{r=1}^{l-j}2\sin \frac{r\pi}{N})(\prod_{r=1}^{l+j}2\sin \frac{r\pi}{N}).$$
We will see that $E(j,l)$ is maximized when $j=0$ and $l=5N/6.$ Moreover, we have the following result.
\begin{proposition} There exists a nonzero number $C$ such that for any $\alpha \in (\frac{1}{2},\frac{2}{3})$, we have
$$\sum_{j=1, N-j+1 \text{ \rm even}}^{N/2-1} \sum_{l=j}^{N-j} q^{mj^2/8} D_2(j,l) = (-1)^{N-1} C\, E(0, \frac{5N}{6}) N (1+O(N^{3\alpha-2})).$$
\label{prop01}
\end{proposition}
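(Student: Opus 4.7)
The plan is to apply a Laplace (saddle-point) analysis to the double sum. The key observation is that, by Lemma~\ref{lm02} and $\{k\}=2i\sin(k\pi/N)$, inside the range $j\le l<N-j$ one has $S'(j-l,j+l)=(-1)^{N-1}E(j,l)$, so $D_2(j,l)=2(-1)^{N-1}E(j,l)$. Since $q^{mj^2/8}=e^{\pi i mj^2/(4N)}$ at our root of unity, the sum in the proposition equals
\[
2(-1)^{N-1}\sum_{\substack{1\le j<N/2\\ j\equiv N+1\pmod 2}} e^{\pi i mj^2/(4N)}\sum_{l=j}^{N-j}E(j,l),
\]
and the task reduces to the asymptotic analysis of a real positive amplitude $E(j,l)$ against a purely unimodular Gaussian phase in $j$.

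First, I would locate the maximum of $E$ and determine its shape. By the standard Riemann-sum asymptotic,
\[
\tfrac{1}{N}\log E(j,l)=-\tfrac{1}{\pi}\bigl[\Lambda(\pi(l-j)/N)+\Lambda(\pi(l+j)/N)\bigr]+O(N^{-1}\log N),
\]
where $\Lambda(\theta)=-\int_0^\theta\log|2\sin t|\,dt$ is the Lobachevsky function. The limiting function $\phi(x,y)=-\pi^{-1}[\Lambda(\pi(y-x))+\Lambda(\pi(y+x))]$ on the triangle $\{0\le x\le y\le 1-x\}$ has $\Lambda'(\theta)=-\log|2\sin\theta|$ vanishing only at $\theta=\pi/6,5\pi/6$, so its unique interior critical point is $(x,y)=(0,5/6)$. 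A direct computation gives $\partial_x^2\phi=\partial_y^2\phi=2\pi\cot(5\pi/6)=-2\pi\sqrt{3}$ and $\partial_x\partial_y\phi=0$ at that point, so the Hessian is negative definite. A third-order Taylor expansion then yields, uniformly for $|j|,|l-5N/6|\le N^\alpha$ with $\alpha<2/3$,
\[
\log E(j,l)-\log E(0,5N/6)=-\tfrac{\pi\sqrt{3}}{N}\bigl(j^2+(l-5N/6)^2\bigr)+O(N^{3\alpha-2}).
\]

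Next, I would localize: restrict both sums to the box $|j|,|l-5N/6|\le N^\alpha$. Outside the box, the negative-definite Hessian forces $E(j,l)/E(0,5N/6)\le e^{-cN^{2\alpha-1}}$ for some $c>0$, which summed against $O(N^2)$ lattice points is super-polynomially small as soon as $\alpha>1/2$. Inside the box, the approximation above reduces the evaluation to two one-dimensional Gaussian sums. The $l$-sum, a Riemann sum, gives
\[
\sum_l e^{-\pi\sqrt{3}(l-5N/6)^2/N}=\sqrt{N/\sqrt{3}}\cdot(1+o(1)),
\]
and the $j$-sum, restricted to positive integers of a fixed parity (density $1/2$), gives
\[
\sum_{j\ge 1,\,j\equiv N+1\,(2)} e^{(i\pi m/4-\pi\sqrt{3})j^2/N}=\tfrac{1}{2}\sqrt{N}\int_0^\infty e^{(i\pi m/4-\pi\sqrt{3})u^2}\,du\cdot(1+o(1)),
\]
an absolutely convergent complex Gaussian that is nonzero because $\operatorname{Re}(i\pi m/4-\pi\sqrt{3})=-\pi\sqrt{3}<0$ regardless of $m$. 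Multiplying the two and folding in the prefactor $2(-1)^{N-1}$ and the multiplicative error $O(N^{3\alpha-2})$ produces the claimed form with an explicit nonzero constant $C$.

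\textbf{Main obstacle.} The principal difficulty is the sharp uniform remainder $O(N^{3\alpha-2})$: one needs a uniform cubic Taylor bound for $\phi$ together with a matching Euler--Maclaurin control of $\log E(j,l)-N\phi(j/N,l/N)$ throughout the $N^\alpha$-box, which must both be tighter than the contribution of the Gaussian integrals themselves. Once these uniform bounds are in hand, the Gaussian summation is routine; in particular the nonvanishing of $C$ is automatic since the real part of the $j$-quadratic exponent is the $m$-independent negative constant $-\pi\sqrt{3}$. A minor bookkeeping point is the parity restriction on $j$ together with the cap $j<N/2$, but the latter is harmless because only $|j|=O(N^\alpha)\ll N/2$ contributes, and the former only affects $C$ by a factor $1/2$.
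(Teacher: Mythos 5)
Your proposal is correct and follows essentially the same route as the paper: the Lobachevsky-function Riemann-sum asymptotics for $\log E(j,l)$, identification of the unique maximum at $(0,\tfrac{5N}{6})$ with quadratic Taylor expansion, localization to an $N^\alpha$-neighborhood with exponentially small tail, and evaluation of the resulting Gaussian sum (your factored product of two one-dimensional Gaussians, with the density-$\tfrac12$ parity factor, reproduces exactly the paper's constant $C=\tfrac12\int_{\BR^2}\exp(\tfrac{\pi i m}{4}x^2-\pi\sqrt3(x^2+y^2))\,dx\,dy$). You even make explicit the nonvanishing of $C$ and the uniform-remainder issue that the paper delegates to Zheng's argument.
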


\begin{proof}
By setting
$$s_n=-\sum_{j=1}^n \log | 2\sin \frac{j\pi}{N} |,$$
we have $\log E(j,l)=-s_{l-j}-s_{l+j}.$ Consider the Lobachevsky function
$$L(x):=-\int_0^x \log |2\sin u|du.$$
By a standard argument, see e.g. \cite{GL,Zh}, $s_n=\frac{N}{\pi}L(\frac{n\pi}{N})+O(\log N).$ Hence
$$\log E(j,l)=-\frac{N}{\pi}L(\frac{(l-j)\pi}{N})-\frac{N}{\pi}L(\frac{(l+j)\pi}{N})+O(\log N)=\frac{N}{\pi} f(\frac{j\pi}{N},\frac{l\pi}{N})+O(\log N),$$
where $f(x,y)=-L(-x+y)-L(x+y)$ for $\pi \ge y \ge x \ge 0$ and $\pi \ge x+y.$

It is easy to show that the function $f$ attains its maximum at the unique point $(x,y)=(0,\frac{5\pi}{6})$ . Moreover, the Taylor expansion of $f$ around $(0,\frac{5\pi}{6})$ is
$$f(h,\frac{5\pi}{6}+k)=f(0,\frac{5\pi}{6})-\sqrt{3}(h^2+k^2)+O(|h|^3+|k|^3).$$
 By the same argument as in the proof of theorem 1.2 in \cite{Zh}, there exists $\epsilon >0$ such that
$$  \log E(j,l)\begin{cases}
    	< \log E(0,\frac{5N}{6}) - \epsilon N^{2\alpha-1} + O(1) &\text{  if } j^2+(l-\frac{5N}{6})^2 \ge N^{2\alpha},\\
 		= \log E(0,\frac{5N}{6}) - \frac{\pi\sqrt{3}}{N}[j^2+(l-\frac{5N}{6})^2] + O(N^{3\alpha-2}) &  \text{  otherwise }
  	\end{cases}$$

Let
\begin{eqnarray*}
	I_1 &=& \{(j,l): 1 \le j < N/2, N-j+1 \text{ even}, j \le l \le N-j \text{ and } j^2+(l-\frac{5N}{6})^2 \ge N^{2\alpha}\},\\
	I_2 &=& \{(j,l): 1 \le j < N/2, N-j+1 \text{ even}, j \le l \le N-j \text{ and } j^2+(l-\frac{5N}{6})^2 \le N^{2\alpha}\}.
\end{eqnarray*}
Then we have
$$  E(j,l) = \begin{cases}
    	E(0,\frac{5N}{6}) \exp(-\epsilon N^{2\alpha-1}) O(1) \quad &\text{if } (j,l) \in I_1,\\
 		E(0,\frac{5N}{6}) \exp(-\frac{\pi\sqrt{3}}{N}[j^2+(l-\frac{5N}{6})^2]) (1+O(N^{3\alpha-2}))\quad &  \text{if } (j,l) \in I_2.
  	\end{cases}$$
It implies that
$$\sum_{(j,l) \in I_1} q^{mj^2/8} E(j,l) = E(0,\frac{5N}{6}) N^2 \exp(-\epsilon N^{2\alpha-1}) O(1),$$
and $\sum_{(j,l) \in I_2} q^{mj^2/8} E(j,l)=$
\begin{eqnarray*}	
	&=& \sum_{(j,l) \in I_2} \exp(\frac{\pi i mj^2}{4N}) E(0,\frac{5N}{6}) \exp(-\frac{\pi\sqrt{3}}{N}[j^2+(l-\frac{5N}{6})^2]) (1+O(N^{3\alpha-2})) \\
	&=& \frac{N}{4}\int_{x^2+y^2<N^{2\alpha-1}} \exp(\frac{\pi i m}{4}x^2-\pi\sqrt{3}(x^2+y^2))dxdy E(0,\frac{5N}{6})(1+O(N^{3\alpha-2})) \\
	&=& \frac{N}{4}\int_{\BR ^2} \exp(\frac{\pi i m}{4}x^2-\pi\sqrt{3}(x^2+y^2))dxdy E(0,\frac{5N}{6})(1+O(N^{3\alpha-2})).
\end{eqnarray*}
Hence
\begin{eqnarray*}
 \sum_{j=1, N-j+1 \text{ even}}^{N/2-1} \sum_{l=j}^{N-j} q^{mj^2/8}  D_2(j,l)
&=&2 (-1)^{N-1} \left [\sum_{(j,l) \in I_1} q^{mj^2/8} E(j,l) +\sum_{(j,l) \in I_2} q^{mj^2/8} E(j,l) \right] \\
&=& (-1)^{N-1} C E(0,\frac{5N}{6})N (1+O(N^{3\alpha-2})),
\end{eqnarray*}
where
$$C =\frac{1}{2}\int_{\BR ^2} \exp(\frac{\pi i m}{4}x^2-\pi\sqrt{3}(x^2+y^2))dxdy.$$
We can easily check that $C$ is a nonzero number. This completes the proof the proposition \ref{prop01}.\end{proof}

From now on, we fix the number $\alpha \in (\frac{1}{2},\frac{2}{3})$.

\subsubsection{The subcase $l<j$}  By lemma \ref{lm02}, the sign of $S(j-l,j+l)$ is $i^{l+j}/i^{j-l-1}=i(-1)^l$. Hence we get $S(j-l,j+l)=i(-1)^l F(j,l)$, where
$$F(j,l)= (\prod_{r=1}^{l+j}2\sin \frac{r\pi}{N})/(\prod_{r=1}^{j-l-1}2\sin \frac{r\pi}{N})$$
for $0<l<j<N/2.$ Note that $\log F(j,l)=s_{j-l-1}-s_{l+j}$ and roughly speaking, in this case $F(j,l)$ attains its maximum at $j=N/2$ and $l=N/3$. We claim that
 	
\begin{proposition} One has
\begin{equation}
\sum_{j=1, N-j+1 \text{ \rm even}}^{N/2-1} \sum_{l<j} q^{mj^2/8} D_1(j,l) = N^{3\alpha} F(\frac{N}{2},\frac{N}{3})O(1).
\label{eq13}
\end{equation}
\label{prop10}
\end{proposition}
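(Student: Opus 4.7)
The approach parallels that of Proposition \ref{prop01}, but we need only an upper bound and the new feature is a cancellation provided by the alternating sign $(-1)^l$ hidden inside $S(j-l,j+l)$. Since $\{r\}=2i\sin(r\pi/N)$, for $0\le l<j<N/2$ one has $S(j-l,j+l)=i(-1)^l F(j,l)$, and hence
$$D_1(j,l) = i(-1)^l\, C(j,l)\, F(j,l), \quad C(j,l):=\frac{mj}{2}+\frac{v^j+v^{-j}}{\{j\}}+2\{2j\}\sum_{k=1}^l\frac{1}{A(j,k)}.$$
Using $s_n=(N/\pi)L(n\pi/N)+O(\log N)$ gives $\log F(j,l)=(N/\pi)\,g(j\pi/N,l\pi/N)+O(\log N)$ with $g(x,y):=L(x-y)-L(x+y)$. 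The critical-point equations $|2\sin(x\pm y)|=1$ have the unique interior solution $(\pi/2,\pi/3)$ on $\{0<y<x<\pi/2\}$, and the Hessian of $g$ there is $-2\sqrt{3}\,I$; so $F$ is maximized near $(j,l)=(N/2,N/3)$ with Gaussian decay of width $\sqrt{N}$ and relative error $1+O(N^{3\alpha-2})$ on the disk of radius $N^\alpha$ about that point.

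As in Proposition \ref{prop01}, I would split the index set into $I^{in}:=\{(j-N/2)^2+(l-N/3)^2<N^{2\alpha}\}$ and its complement $I^{out}$. The $I^{out}$-contribution is superpolynomially smaller than $F(N/2,N/3)$: on $I^{out}$ one has $F(j,l)\le F(N/2,N/3)\exp(-\epsilon N^{2\alpha-1})$, while crude bounds give $|C(j,l)|=O(N^3)$ and $|I^{out}|=O(N^2)$. On $I^{in}$, the key observation is that both $\cot(j\pi/N)$ and $\{2j\}$ are $O(N^{\alpha-1})$ (because $|j-N/2|<N^\alpha$, and $\cot(\pi/2+\theta)=-\tan\theta$, $\sin(\pi+\theta)=-\sin\theta$ vanish to first order at these points), while each $|\{j\pm k\}|$ for $1\le k\le l$ is bounded below by a positive constant (since $(j\pm k)\pi/N$ stays in a compact subinterval of $(0,\pi)$). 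Hence $\sum_{k=1}^l 1/|A(j,k)|=O(N)$ and we obtain the decomposition
$$C(j,l) = \frac{mN}{4} + R(j,l), \qquad |R(j,l)|=O(N^\alpha) \text{ on } I^{in}.$$

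Splitting the $I^{in}$-sum along this decomposition produces two pieces. The $R$-piece is bounded trivially by $|I^{in}|\cdot O(N^\alpha)\cdot F(N/2,N/3)=O(N^{3\alpha}\,F(N/2,N/3))$. For the constant piece $(mN/4)\sum_{I^{in}}(-1)^l q^{mj^2/8}F(j,l)$, substitute the Gaussian approximation $F(j,l)=F(N/2,N/3)\exp(-\pi\sqrt{3}((j-N/2)^2+(l-N/3)^2)/N)(1+O(N^{3\alpha-2}))$: the factor $(-1)^l$ produces, against a Gaussian of width $\sqrt{N}$ in $l$, a high-frequency oscillation whose Poisson summation gives an inner $l$-sum that is superpolynomially small in $N$, so the dominant contribution comes from the multiplicative-error factor $O(N^{3\alpha-2})$ times the unoscillated two-dimensional Gaussian integral of order $N$, yielding $(mN/4)\cdot O(N^{3\alpha-1})\cdot F(N/2,N/3)=O(N^{3\alpha}\,F(N/2,N/3))$. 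Adding the two pieces gives the stated bound.

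The main obstacle is the constant piece: because $mN/4=O(N)$ is large, only the $(-1)^l$ cancellation keeps the sum under control, and that cancellation must be analyzed together with the relative error $O(N^{3\alpha-2})$ in the Gaussian approximation of $F$. This error is precisely what saturates the claimed bound $O(N^{3\alpha}F(N/2,N/3))$, so the argument is tight at this exponent with our choice of $\alpha$.
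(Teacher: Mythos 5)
Your argument is correct in outline and reaches the stated bound, but it realizes the crucial cancellation by a genuinely different mechanism than the paper. Both proofs localize to the disk of radius $N^\alpha$ about $(N/2,N/3)$, dispose of the outer region by the exponential decay of $F$ against the crude polynomial bounds on the coefficient, and split the coefficient as $\frac{mN}{4}+R(j,l)$ with $R=O(N^\alpha)$ on the inner region, so that the $R$-piece is killed trivially (this is the content of Lemmas \ref{lm03} and \ref{lm04}). The difference is in the $\frac{mN}{4}$-piece. The paper's Lemma \ref{lm05} exploits the sign $(-1)^l$ by pairing consecutive values of $l$: the exact identity $S(j-l,j+l)+S(j-l-1,j+l+1)=S(j-l,j+l)\bigl[(2+2\cos\frac{2j\pi}{N})-(1+2\cos\frac{(2l+1)\pi}{N})\bigr]$ shows each pair-sum is $N^{\alpha-1}F(\frac N2,\frac N3)O(1)$ on $I_4$, and the discrepancy between the paired and unpaired sums telescopes to $O(N^\alpha)$ boundary terms. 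This needs only the one-sided bound $F(j,l)\le F(\frac N2,\frac N3)$ on the inner region. You instead substitute the Gaussian profile for $F$ and invoke Poisson summation to make the alternating sum of the exact Gaussian superpolynomially small, leaving the $O(N^{3\alpha-2})$ relative error times the $O(N)$ Gaussian mass as the dominant contribution. That works, but it requires the refined approximation $F(j,l)=F(\frac N2,\frac N3)\exp(-\frac{\pi\sqrt3}{N}[(j-\frac N2)^2+(l-\frac N3)^2])(1+O(N^{3\alpha-2}))$ uniformly on the inner disk — note that the crude estimate $s_n=\frac N\pi L(\frac{n\pi}{N})+O(\log N)$ only gives a multiplicative error $N^{O(1)}$, which would ruin your bound, so you must run the Zheng-type differencing argument (which the paper carries out only for $E$ in Proposition \ref{prop01}) for $F$ as well; you should also say a word about truncating the alternating Gaussian sum to the $j$-dependent interval $J_j$, though the Gaussian decay at the boundary of the disk makes this routine. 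In short: your route is conceptually parallel to Proposition \ref{prop01} and makes the role of the stationary phase point transparent, while the paper's pairing argument is more elementary and self-contained at this step.
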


\begin{proof}
Let
\begin{eqnarray*}
	I_3 &=& \{(j,l): 1 \le j < N/2, N-j+1 \text{ even}, l<j \text{ and } (j-\frac{N}{2})^2+(l-\frac{N}{3})^2 \ge N^{2\alpha}\},\\
	I_4 &=& \{(j,l): 1 \le j < N/2, N-j+1 \text{ even}, l<j \text{ and } (j-\frac{N}{2})^2+(l-\frac{N}{3})^2 \le N^{2\alpha}\}.
\end{eqnarray*}
By the same argument as in the proof of the previous proposition, we have
$$  F(j,l) \begin{cases}
    	= F(\frac{N}{2},\frac{N}{3}) \exp(-\epsilon N^{2\alpha-1}) O(1) \quad &\text{if } (j,l) \in I_3,\\
 		\le F(\frac{N}{2},\frac{N}{3}) \quad &  \text{if } (j,l) \in I_4.
  	\end{cases}$$

To prove the proposition, we need the following three lemmas:
  	
\begin{lemma} One has $$\sum_{(j,l) \in I_3} q^{mj^2/8} \left( \frac{mj}{2} + \frac{v^j + v^{-j}}{\{j\}} + 2 \{2j\} \sum_{k=1}^l \frac{1}{A(j,k)} \right) S(j-l,j+l) =$$
$$=N^5 F(\frac{N}{2},\frac{N}{3}) \exp(-\epsilon N^{2\alpha-1}) O(1).$$
\label{lm03}
\end{lemma}

\begin{proof} It suffices to show that if $(j,l) \in I_3$ then
\begin{equation} \frac{mj}{2}   + \frac{v^j + v^{-j}}{\{j\}} + 2 \{2j\} \sum_{k=1}^l \frac{1}{A(j,k)}=N^3 O(1). \label{eq11} \end{equation}
Indeed, since $\sin x \ge \frac{2}{\pi} x$ for $x \in [0, \frac{\pi}{2}]$, we have
$$|\frac{v^j + v^{-j}}{\{j\}}|=|\frac{\cos (j\pi /N)}{\sin (j\pi /N)}| \le \frac{\pi}{2} \cdot \frac{N}{j\pi} \le \frac{N}{2}.$$
Note that $k \le l \le j-1$, hence $$|A(j,k)|=|\{j-k\}\{j+k\}|=4 \sin \frac{(j-k)\pi}{N} \sin \frac{(j+k)\pi}{N} \ge 4 \sin \frac{\pi}{N} \sin \frac{(2j-1)\pi}{N}.$$
It implies that
$$|\frac {\{2j\}} {A(j,k)}| \le \frac {\sin \frac{2j\pi}{N}} {2\sin \frac{\pi}{N} \sin \frac{(2j-1)\pi}{N}}.$$
If $j > \frac{N}{4}$ then $\sin \frac{(2j-1)\pi}{N}>\sin \frac{2j\pi}{N} >0$, therefore $$|\frac {\{2j\}} {A(j,k)}| \le \frac {1} {2\sin \frac{\pi}{N}} \le \frac{N}{4}.$$
If $j \le \frac{N}{4}$ then
$$|\frac {\{2j\}} {A(j,k)}| \le \frac {1} {2\sin^2 \frac{\pi}{N}} \le \frac{N^2}{8}.$$
Hence \eqref{eq11} holds true and then the claim of the lemma is proved.
\end{proof}

\begin{lemma} One has
$$\sum_{(j,l) \in I_4} q^{mj^2/8} \left( \frac{m}{2} (j-\frac{N}{2})  + \frac{v^j + v^{-j}}{\{j\}} + 2 \{2j\} \sum_{k=1}^l \frac{1}{A(j,k)} \right) S(j-l,j+l)=$$
$$=N^{3\alpha}F(\frac{N}{2},\frac{N}{3})O(1).$$
\label{lm04}
\end{lemma}

\begin{proof}
Since $(j-\frac{N}{2})^2+(l-\frac{N}{3})^2 \le N^{2\alpha}$, we have
$$|\frac{v^j + v^{-j}}{\{j\}}|=\cot \frac {j\pi} {N}=|\frac{\pi}{2}-\frac {j\pi} {N}|O(1)=N^{\alpha-1}O(1),$$
and
$$|\frac{\{2j\}}{A(j,k)}| \le |\frac{\{2j\}}{A(j,l)}| = \frac {\sin \frac{2j\pi}{N}} {2\sin \frac{(j-l)\pi}{N} \sin \frac{(l+j)\pi}{N}} = N^{\alpha -1} O(1),$$
which proves the equality of the lemma.
\end{proof}

\begin{lemma} One has
	$$\sum_{(j,l) \in I_4} q^{mj^2/8} S(j-l,j+l) = N^{3\alpha-1} F(\frac{N}{2}, \frac{N}{3}) O(1).$$
	\label{lm05}
\end{lemma}

\begin{proof} 

Denote by $L$ the left hand side of the equality. We first see that $L$ is essentially equal to 
the following expression
$$L'=\frac{1}{2} \sum_{(j,l) \in I_4} q^{mj^2/8} [S(j-l,j+l) + S(j-l-1,j+l+1)].$$
Note that for $(j,l) \in I_4$, we have $S(j-l,j+l) + S(j-l-1,j+l+1)=$
	\begin{eqnarray*}
		 &=& S(j-l,j+l) [1-4\sin \frac{(j-l-1)\pi}{N} \sin \frac{(j+l+1)\pi}{N}] \\
		&=& S(j-l,j+l) [(2+2\cos \frac{2j\pi}{N})-(1+2\cos \frac{(2l+1)\pi}{N})]\\
		&=& N^{\alpha-1} F(\frac{N}{2},\frac{N}{3})O(1).
	\end{eqnarray*}
This implies that $L'=N^{3\alpha-1} F(\frac{N}{2}, \frac{N}{3}) O(1).$ 
Hence to complete the proof of the lemma, we need to estimate the difference between $L$ and $L'.$ 

Let $J=\{j: (j,l) \in I_4 \text{~for some~} l\}.$ For each $j \in J,$ let $J_j=\{l: (j,l) \in I_4\}.$ 
We have
\begin{equation}
L'-L=\frac{1}{2} \sum_{j \in J} q^{mj^2/8} \sum_{l \in J_j} S(j-l-1,j+l+1)-S(j-l,j+l).
\label{eq30}
\end{equation}
For each $j$ in $J$, it is easy to see that the set $J_j$ is just a closed interval $[a_j,b_j].$ Hence $$\sum_{l \in J_j} S(j-l-1,j+l+1)-S(j-l,j+l)=S(j-b_j-1,j+b_j+1)-S(j-a_j,j+a_j)$$ has absolute value less than or equal to $2F(\frac{N}{2}, \frac{N}{3}).$ Equation \eqref{eq30} then implies that $|L'-L| \le N^{\alpha} 2F(\frac{N}{2}, \frac{N}{3}).$ 
Since $\alpha<3\alpha-1,$ the conclusion of the lemma follows.
\end{proof}

We now come back to the proof of proposition \ref{prop10}. It is easy to see that lemmas \ref{lm04} and \ref{lm05} imply
$$\sum_{(j,l) \in I_4} q^{mj^2/8} \left( \frac{mj}{2}  + \frac{v^j + v^{-j}}{\{j\}} + 2 \{2j\} \sum_{k=1}^l \frac{1}{A(j,k)} \right) S(j-l,j+l)=N^{3\alpha}F(\frac{N}{2},\frac{N}{3})O(1).$$
Hence, by combining this with lemma \ref{lm03}, we obtain the equality of the proposition.
\end{proof}

We can now estimate the first sum in the right hand side of Equation \eqref{eq12}. To do that, we need one more lemma which allows us to express the right hand side of Equation \eqref{eq13}, and hence the sum in its left hand side, in terms of $E(0,\frac{5N}{6}).$

\begin{lemma} One has $$F(\frac{N}{2},\frac{N}{3})=\frac{1}{N}E(0,\frac{5N}{6}) O(1).$$
	
	\label{lem10}
\end{lemma}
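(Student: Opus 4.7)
The plan is to avoid the Lobachevsky-function asymptotics entirely and instead derive an exact identity, which (since we only want an $O(1)$ bound) is much cleaner than estimating each factor.

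First I would compute the logarithmic difference directly from the definitions. Using $\log F(j,l)=s_{j-l-1}-s_{j+l}$ and $\log E(0,r)=-2s_{r}$, we get
$$\log F(N/2,N/3)-\log E(0,5N/6)=\bigl(s_{N/6-1}-s_{5N/6}\bigr)-\bigl(-2s_{5N/6}\bigr)=s_{N/6-1}+s_{5N/6}.$$
Note the crucial arithmetic: $(N/6-1)+5N/6 = N-1$, so the two subscripts on the right-hand side are complementary in the sense of ranging over the full set $\{1,\dots,N-1\}$.

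Second, I would prove the exact identity $s_n+s_{N-1-n}=-\log N$ for every $n\in\{0,1,\dots,N-1\}$. This relies on two facts: the classical product formula $\prod_{r=1}^{N-1}2\sin(r\pi/N)=N$, obtained from the factorization of $z^N-1$; and the symmetry $\sin((N-r)\pi/N)=\sin(r\pi/N)$, which lets one rewrite
$$\sum_{r=n+1}^{N-1}\log|2\sin(r\pi/N)|=\sum_{s=1}^{N-1-n}\log|2\sin(s\pi/N)|=-s_{N-1-n}.$$
Combining these with $-s_n=\sum_{r=1}^n\log|2\sin(r\pi/N)|$ yields the identity.

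Applying the identity with $n=N/6-1$ then gives $s_{N/6-1}+s_{5N/6}=-\log N$, hence $F(N/2,N/3)=E(0,5N/6)/N$ exactly (when $6\mid N$, so that all indices are integers). The main obstacle, and essentially the only subtlety, is handling the case $6\nmid N$. In that case one replaces $N/2,\,N/3,\,5N/6$ by their nearest integer values $\lfloor N/2\rfloor$, $\lfloor N/3\rfloor$, etc.; this alters the numerator and denominator of $F$ and of $E$ by at most a bounded number of factors of the form $2\sin(r\pi/N)$ with $r/N$ bounded away from $0$ and $1$, each such factor being $\Theta(1)$. These corrections contribute the $O(1)$ multiplicative constant in the statement, completing the lemma.
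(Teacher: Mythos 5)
Your proof is correct and is the same argument the paper has in mind: the paper's one-line proof invokes exactly the product formula $\prod_{r=1}^{N-1}2\sin(r\pi/N)=N$, and your computation (combining it with the symmetry $\sin((N-r)\pi/N)=\sin(r\pi/N)$ to get the exact identity $F(\frac{N}{2},\frac{N}{3})=\frac{1}{N}E(0,\frac{5N}{6})$ when $6\mid N$) is just the worked-out version of that remark. The handling of $6\nmid N$ via boundedly many $\Theta(1)$ sine factors is the right way to absorb the rounding into the $O(1)$.
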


\begin{proof}
The follows from the fact that $\prod_{k=1}^{N-1} (2\sin \frac{k\pi}{N}) = N.$
\end{proof}

From proposition \ref{prop10} and lemma \ref{lem10}, we have
$$\sum_{j=1, N-j+1 \text{ even}}^{N/2-1} \sum_{l<j} q^{mj^2/8} D_1(j,l) = N^{3\alpha-1} E(0,\frac{5N}{6})O(1).$$
This, together with proposition \ref{prop01}, implies that
\begin{equation}
\sum_{l=0}^{N-1} \sum_{j=1, N-j+1 \text{ even}}^{N/2-1}  q^{mj^2/8} D(j,l) = (-1)^{N-1} C E(0, \frac{5N}{6}) N (1+O(N^{3\alpha-2})).
\label{eq04}
\end{equation}

\subsection{The case $j \ge N/2$} Again, by lemma \ref{rem01} we have
$$ D(j,l)= \begin{cases} D_1(j,l)=\left( \frac{mj}{2}   + \frac{v^j + v^{-j}}{\{j\}} + 2 \{2j\} \sum_{k=1}^l \frac{1}{A(j,k)} \right) S(j-l,j+l) & \text{if } l < N-j\\
D_2(j,l)=-2  S'(j-l,j+l) &\text{if } N-j \le l <  j\\
0 & \text{if } l \ge j
             \end{cases} $$
In this case all the estimations we have done in section \ref{subsec01} also work for the second sum in the right hand side of Equation \eqref{eq12} except two things. The first one is that for $N-j \le l < j$, by lemma \ref{lm02}, the sign of $S'(j-l,j+l)$ is $(-1)^{N-j}=-1$ and the other is $$q^{mj^2/8}=q^{m(N-j)^2/8} \exp(\pi i m(2j-N)/4),$$
where $$\exp(\pi i m(2j-N)/4) =
			\begin{cases} \beta=\exp(\pi i m(N+2)/4) , \quad & \text{if } N-j+1 \equiv 0 \pmod 4,\\
\gamma=\exp(\pi i m(N-2)/4) , \quad & \text{if } N-j+1 \equiv 2 \pmod 4.
            \end{cases} $$
Therefore by similar arguments as in the proof of Equation \eqref{eq04}, we get
\begin{equation}
\sum_{l=0}^{N-1} \sum_{j=N/2, N-j+1 \text{ even}}^{N-1}  q^{mj^2/8} D(j,l) = \frac{1}{2}(\beta+\gamma)C E(0, \frac{5N}{6}) N (1+O(N^{3\alpha-2})).
\label{eq15}
\end{equation}

\subsection{Proof of theorem \ref{thm01}}

From Equations \eqref{eq04} and \eqref{eq15}, we have
\begin{equation}
\sum_{l=0}^{N-1} \sum_{j=1, N-j+1 \text{ even}}^{N-1} q^{mj^2/8} D(j,l) = \frac{1}{2}(\beta+\gamma+2(-1)^{N-1})C E(0, \frac{5N}{6}) N (1+O(N^{3\alpha-2})).
\label{eq16}
\end{equation}

Moreover, it is easy to see that 
\begin{equation}
\sum_{l=0}^{N-1} D(0,l)= N^\alpha E(0,\frac{5N}{6})O(1).
\label{eq17}
\end{equation} Therefore, to complete the proof of theorem \ref{thm01}, we need the following lemma

\begin{lemma} We have $\beta+\gamma+2(-1)^{N-1}=0$ if and only if one of the following holds:

$(i)$ $ m\equiv 0 \pmod 4$ and $N$ is even.

$(ii)$ $ m\equiv 2 \pmod 4$ and $ N\equiv 2 \pmod 4$.

$(iii)$ $m \equiv 4 \pmod 8$ and $N$ is odd.\\
Moreover, if $\beta+\gamma+2(-1)^{N-1} \not=0$ then $|\beta+\gamma+2(-1)^{N-1}| \ge 2$.
\label{lm06}
\end{lemma}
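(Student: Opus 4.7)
The plan is to reduce the expression $\beta+\gamma+2(-1)^{N-1}$ to a single real or complex number that depends only on $m\pmod 8$ and $N\pmod 4$, and then run a finite case check.

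First, factor the common phase to write
\[
\beta+\gamma \;=\; e^{\pi i m N/4}\bigl(e^{\pi i m/2}+e^{-\pi i m/2}\bigr)
\;=\; 2\,e^{\pi i m N/4}\cos\!\bigl(\pi m/2\bigr).
\]
The factor $\cos(\pi m/2)$ vanishes exactly when $m$ is odd, equals $+1$ when $m\equiv 0\pmod 4$, and equals $-1$ when $m\equiv 2\pmod 4$. The phase factor $e^{\pi i m N/4}$ is then computed modulo 8 in $m$: writing $m=4m'$ or $m=4m'+2$ reduces it to $(-1)^{m'N}$ or $(-1)^{m'N}\,i^N$ respectively.

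Next I would split into cases:

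\textbf{$m$ odd.} Then $\beta+\gamma=0$, so the sum equals $2(-1)^{N-1}$, which is nonzero of absolute value $2$.

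\textbf{$m\equiv 0\pmod 4$, say $m=4m'$.} Then $\beta+\gamma=2(-1)^{m'N}$, so
\[
\beta+\gamma+2(-1)^{N-1}=2(-1)^{m'N}-2(-1)^{N}.
\]
This vanishes iff $(-1)^{m'N}=(-1)^{N}$. For $N$ even both sides are $1$, giving case (i). For $N$ odd it becomes $(-1)^{m'}=-1$, i.e.\ $m'$ odd, i.e.\ $m\equiv 4\pmod 8$, giving case (iii); the remaining case $m\equiv 0\pmod 8$, $N$ odd yields $\pm 4$.

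\textbf{$m\equiv 2\pmod 4$, say $m=4m'+2$.} Then $\beta+\gamma=-2(-1)^{m'N}i^N$. For $N$ even, $i^N=(-1)^{N/2}$ is real, so the full sum is $-2(-1)^{m'N+N/2}-2(-1)^N$; this is zero exactly when $N\equiv 2\pmod 4$ (case (ii)), and equals $\pm 4$ when $N\equiv 0\pmod 4$. For $N$ odd, $i^N=\pm i$ is purely imaginary, so $\beta+\gamma$ is purely imaginary while $2(-1)^{N-1}=2$ is real, giving absolute value $\sqrt{4+4}=2\sqrt 2>2$.

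Assembling these cases gives both the stated equivalence and, inspecting the nonzero values ($\pm 2$, $\pm 4$, or $\pm 2\sqrt 2$), the lower bound $|\beta+\gamma+2(-1)^{N-1}|\ge 2$. There is no serious obstacle here; the only thing to be careful about is bookkeeping of the $\pmod 8$ versus $\pmod 4$ distinctions for $m$ and $N$, which is why the statement lists the three subcases (i)--(iii) separately.
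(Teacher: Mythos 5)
Your proof is correct and follows essentially the same route as the paper: an elementary finite case check on the roots of unity $\beta,\gamma$ according to $m \bmod 8$ and $N \bmod 4$. The only cosmetic difference is that you compute $\beta+\gamma=2e^{\pi i mN/4}\cos(\pi m/2)$ explicitly and enumerate the possible values ($0,\pm2,\pm4,\pm2\sqrt2$), whereas the paper deduces $\beta=\gamma=\mp1$ from $|\beta|=|\gamma|=1$ and gets the lower bound from an angle argument; both are complete.
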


\begin{proof}
Suppose that $\beta+\gamma+2(-1)^{N-1}=0$. If $N$ is even then $\beta+\gamma=2$. Since $|\beta|=|\gamma|=1$, it implies that $\beta=\gamma=1$ which means that $ m\equiv 0 \pmod 4$; or $ m\equiv 2 \pmod 4$ and $ N\equiv 2 \pmod 4$. If $N$ is odd then similarly, we have $\beta=\gamma=-1$, which is equivalent to the condition that $m \equiv 4 \pmod 8$.

Note that if $m$ is odd then $\beta+\gamma=0$, hence $|\beta+\gamma+2(-1)^{N-1}|=2$. Now let us consider the case $m$ is even. Then $\beta=\gamma$ and $\beta+\gamma+2(-1)^{N-1}=2(\beta+(-1)^{N-1})$, so it is remaining to show that $|\beta+(-1)^{N-1}| \ge 1$. Since
$\beta=i^{\frac{m}{2}(N+2)} \in \{\pm 1, \pm i\}$ and $\beta+(-1)^{N-1} \not= 0$, it implies that the angle between $\beta$ and $(-1)^{N-1}$ is less than or equal to $\frac{\pi}{2}$. Hence  $$|\beta+(-1)^{N-1}|^2 \ge |\beta|^2+|(-1)^{N-1}|^2=2,$$
which completes the proof of the lemma.
\end{proof}

Under the assumption of theorem \ref{thm01}, by lemma \ref{lm06}, $|\beta+\gamma+2(-1)^{N-1}| \ge 2$. Hence from Equations \eqref{eq16} and \eqref{eq17} we get
$$\frac{\la \cE^{(m,2)} \ra_N}{\delta^{(3N-2)m}a_N^m} = \frac{1}{2}(\beta+\gamma+2(-1)^{N-1})C E(0, \frac{5N}{6}) N (1+O(N^{3\alpha-2})),$$
which, together with the simple fact that $$\log E(0,\frac{5N}{6}) = -\frac{2N}{\pi} L(\frac{5\pi}{6}) + O(\log N)=\frac{2N}{\pi} L(\frac{\pi}{6}) + O(\log N),$$ confirms the volume conjecture for $\cE^{(m,2)}$:
$$ 2 \pi \, \lim_{N\to \infty, N \in S_m} \frac {\log  |\la \cE^{(m,2)} \ra_N|}{N} = 4L(\frac{\pi}{6})=\Vol(\cE^{(m,2)}).$$

\end{document}